
\documentclass[10pt]{amsart}


\usepackage{amsmath, amsthm}
\usepackage{eucal}

\usepackage{amssymb}
\usepackage{amscd}
\usepackage{palatino}
\usepackage{latexsym}
\usepackage{epsfig}
\usepackage{graphicx}
\usepackage{amsfonts}
\usepackage{psfrag}
\usepackage{color}
\usepackage{curves}

\usepackage{url}
\usepackage{cite}

\usepackage[margin=1in]{geometry}

\usepackage{hyperref}


\input xy

\usepackage{caption}

\usepackage{array}

\newtheorem*{Th}{Theorem}
\newtheorem{theorem}{Theorem}[section]
\newtheorem{lemma}[theorem]{Lemma}

\newtheorem{corollary}[theorem]{Corollary}

\theoremstyle{definition}
\newtheorem{example}[theorem]{Example}
\newtheorem{definition}[theorem]{Definition}


 \oddsidemargin=0pt \evensidemargin=0pt
 \topmargin=0in
 \setlength{\textwidth}{6.5in}

\numberwithin{equation}{section}
\numberwithin{figure}{section}


\newcommand{\C}{{\mathbb{C}}}

\newcommand{\Z}{{\mathbb{Z}}}

\newcommand{\R}{{\mathbb{R}}}

\newcommand{\mf}{\mathbf}
\newcommand{\mfc}{\mf{c}}
\newcommand{\mfl}{\mf{\ell}}

\newcommand{\al}{\alpha}

\newcommand{\de}{\delta}
\newcommand{\lee}{\langle}
\newcommand{\ree}{\rangle}
\newcommand{\la}{\lambda}



%
%

%
%


\begin{document}

\title{Grossberg-Karshon twisted cubes and hesitant walk avoidance} 

\author{Megumi Harada}
\address{Department of Mathematics and
Statistics\\ McMaster University\\ 1280 Main Street West\\ Hamilton, Ontario L8S4K1\\ Canada}
\email{Megumi.Harada@math.mcmaster.ca}
\urladdr{\url{http://www.math.mcmaster.ca/Megumi.Harada/}}

\author{Eunjeong Lee}
\address{Department of Mathematical Sciences
\\ KAIST \\ 291 Daehak-ro Yuseong-gu \\ Daejeon 305-701 \\ South Korea}
\email{EunjeongLee@kaist.ac.kr}

\keywords{} \subjclass[2000]{Primary: 20G05;
Secondary: 52B20}

\date{\today}


\begin{abstract}
  Let $G$ be a complex semisimple simply connected linear algebraic
  group. Let $\lambda$ be a dominant weight for $G$ and $\mathcal{I} =
  (i_1, i_2, \ldots, i_n)$ a word decomposition for an element $w =
  s_{i_1} s_{i_2} \cdots s_{i_n}$ of the Weyl group of $G$, where
  the $s_i$ are the simple reflections. 
  In the 1990s, Grossberg and
  Karshon introduced a virtual lattice polytope associated to
  $\lambda$ and $\mathcal{I}$, which they called a \textbf{twisted
    cube}, whose lattice points encode (counted with sign according to
  a density function) characters of representations of $G$. In recent
  work, the first author and Jihyeon Yang prove that the Grossberg-Karshon
  twisted cube is untwisted (so the support of the density function is
  a closed convex polytope) precisely when a certain torus-invariant
  divisor on a toric variety, constructed from the data of $\lambda$
  and $\mathcal{I}$, is basepoint-free. This corresponds to the situation in which the Grossberg-Karshon character formula is a 
true combinatorial formula, in the sense that there are no terms appearing with a minus sign. In this note, we translate
  this toric-geometric condition to the combinatorics of
  $\mathcal{I}$ and $\lambda$. More precisely, we introduce the
  notion of \textbf{hesitant $\lambda$-walks} and then prove that 
the associated Grossberg-Karshon twisted cube is untwisted precisely
when $\mathcal{I}$ is hesitant-$\lambda$-walk-avoiding. 
\end{abstract}

\maketitle

\setcounter{tocdepth}{1} \tableofcontents


\section*{Introduction}

Let $G$ be a complex semisimple simply connected linear algebraic
group. 
Building combinatorial models for $G$-representations is a fruitful
technique in modern representation theory; a famous example is the
theory of crystal bases and string polytopes. In a different
direction, given a dominant weight $\lambda$ and a choice of word
expression $\mathcal{I} = (i_1, i_2, \ldots, i_n)$ of an
element $w = s_{i_1} s_{i_2} \cdots s_{i_n}$ in the Weyl group, Grossberg and Karshon \cite{Grossberg-Karshon} introduced
a combinatorial object called a \textbf{twisted cube} $(C(\mfc,
\mfl),\rho)$ where $C(\mfc, \mfl)$ is a subset of $\R^n$ and $\rho$ is
a support function with support precisely $C(\mfc,\mfl)$. The lattice
points of $C(\mfc, \mfl)$ encode (counted with $\pm$ sign according to $\rho$)
the character of the $G$-representation $V_\lambda$ \cite[Theorems 5
and 6]{Grossberg-Karshon}. Here the parameters $\mfc$ and $\mfl$ are
determined from $\lambda$ and $\mathcal{I}$. 
These
twisted cubes are combinatorially much simpler than general string
polytopes but they are not ``true'' polytopes in the sense that their
faces may have various angles and the intersection of faces may not be
a face (cf. \cite[\S 2.5 and Figure \ref{figure:first example} therein]{Grossberg-Karshon}),
and in general they may be neither closed nor convex (see
Example~\ref{example}). 
In particular, the Grossberg-Karshon character formula is not 
a purely combinatorial `positive' formula, since it may involve minus
signs.

The main result of this note gives necessary and sufficient conditions
on a dominant weight $\lambda$ and a (not necessarily reduced) word expression
$\mathcal{I} = (i_1, \ldots, i_n)$ of an element $w \in W$, 
such that the associated Grossberg-Karshon twisted
cube is untwisted (cf. Definition~\ref{definition:untwisted}), i.e.,
$C(\mfc, \mfl)$ is a closed, convex polytope and $\rho$ is identically
equal to $1$ on $C(\mfc,\mfl)$. This is precisely the situation in
which the Grossberg-Karshon character formula is a `true'
combinatorial formula, in the sense that it is a purely `positive'
formula (with no terms appearing with a minus sign). 

In order to state our result it is useful to introduce some
terminology (see Section~\ref{sec:walks} for details). Roughly, we say
that a word $\mathcal{I} = (i_1, \ldots, i_n)$ is a 
\textbf{diagram walk} (or simply \textbf{walk}) if successive 
roots are adjacent in the Dynkin diagram: for instance, in type $A_5$

\begin{picture}(35,10)                   
                       \put(5,5){\circle{5}}    
                       \put(7.5,5){\line(1,0){20}}
                       \put(30,5){\circle{5}}
                      \put(32.5,5){\line(1,0){20}}
                       \put(55,5){\circle{5}}    
                       \put(57.5,5){\line(1,0){20}}
                       \put(80,5){\circle{5}}    
                       \put(82.5,5){\line(1,0){20}}
                       \put(105,5){\circle{5}}

                      \put(3,-7){$1$}
                      \put(27,-7){$2$}
                      \put(52,-7){$3$}
                      \put(77,-7){$4$}
                      \put(102,-7){$5$}
\end{picture} 

\medskip
\noindent the word $\mathcal{I} = (2,4,5)$ with corresponding simple roots $(s_2, s_4, s_5)$ 
is not a walk since $s_2$ and $s_4$ are
not adjacent, but $\mathcal{I} = (1,2,3,2,1)$ is a walk. Moreover, given a
dominant weight $\lambda = \lambda_1 \varpi_1 + \cdots + \lambda_r
\varpi_r$ written as a linear combination of the fundamental weights
$\{\varpi_1, \ldots, \varpi_r\}$, we say $\mathcal{I} = (i_1, i_2,
\ldots, i_n)$ is a
$\lambda$-walk if it is a walk and if it ends at a root which appears
in $\lambda$, i.e. $\lambda_{i_n} > 0$. A
\textbf{hesitant $\lambda$-walk} is a word $\mathcal{I} = (i_0, 
i_1, \ldots, i_n)$ where $i_0 = i_1$, so there is a repetition at
the first step, and the subword $(i_1, i_2, \ldots, i_n)$ is a
$\lambda$-walk. Finally, a word is
\textbf{hesitant-$\lambda$-walk-avoiding} if there is no subword which
is a hesitant $\lambda$-walk. With this terminology we can state the
main result of this paper.

\begin{Th}
Let $\mathcal{I} = (i_1, i_2, \ldots, i_n)$ be a word
decomposition of an element $w = s_{i_1} s_{i_2} \cdots s_{i_n}$ 
of the Weyl group $W$ 
and let 
$\lambda = \lambda_1 \varpi_1 + \lambda_2 \varpi_2 + \cdots + \lambda_r \varpi_r$ be a
dominant weight. 
Then the corresponding Grossberg-Karshon twisted cube $(C(\mfc, \mfl),
\rho)$ is untwisted if
and only if $\mathcal{I}$ is hesitant-$\lambda$-walk-avoiding. 
\end{Th}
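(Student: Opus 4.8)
The plan is to work entirely with the toric-geometric reformulation provided by Harada–Yang: the twisted cube $(C(\mfc,\mfl),\rho)$ is untwisted if and only if a certain torus-invariant divisor $D$ on the Bott–Samelson-type toric variety $X_{\mathcal{I}}$ (built from $\mathcal{I}$ and $\lambda$) is basepoint-free. So the theorem reduces to showing: \emph{$D$ is basepoint-free $\iff$ $\mathcal{I}$ is hesitant-$\lambda$-walk-avoiding.} The toric variety $X_{\mathcal{I}}$ here is an iterated $\P^1$-bundle (a ``Bott tower''), so its fan is combinatorially a cube, with rays indexed by $\{1,\dots,n\}$ in two ``directions'' (call the primitive generators $u_k$ and $u_k'$, coming from the two boundary points of the $k$-th $\P^1$); the gluing data recording how the successive $\P^1$-bundles twist is governed precisely by the Cartan integers $\langle \varpi_{i_j}, \alpha_{i_k}^\vee\rangle$ appearing in the Grossberg-Karshon construction, hence by the Dynkin-diagram adjacencies between the letters of $\mathcal{I}$.

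First I would write down, explicitly in coordinates on the character lattice $N^\vee \cong \Z^n$, the support function (or equivalently the collection of local linear functionals $m_\sigma$, one for each maximal cone $\sigma$) attached to $D$. Basepoint-freeness of $D$ on a complete toric variety is equivalent to the statement that the support function $\psi_D$ is \emph{convex} and that for every maximal cone $\sigma$ the functional $m_\sigma$ lies in the lattice and satisfies $\langle m_\sigma, u\rangle \ge \langle m_\sigma, u'\rangle$... more usefully, it is equivalent to the combinatorial condition $\langle m_\sigma, u_\rho\rangle \ge -a_\rho$ for every ray $\rho$ and every maximal cone $\sigma$, where $D = \sum a_\rho D_\rho$. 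Because the fan is cubical, the maximal cones $\sigma$ are indexed by subsets $S \subseteq \{1,\dots,n\}$ (choosing $u_k$ or $u_k'$ in each direction), and the inequalities decouple into manageable recursive families indexed by $k$. I would carry out the recursion in $k$ from $n$ downwards: the $k$-th batch of inequalities, after using the ones for indices $>k$, collapses to a single inequality that says, roughly, a certain alternating sum of Cartan integers weighted along a path in the Dynkin diagram starting at $i_k$ is $\le \lambda_{i_n}$ (or $\ge 0$). The key combinatorial identity to extract is that this alternating sum is controlled by whether $(i_k, i_{k+1}, \dots, i_n)$ forms a diagram walk and whether it ends at a letter appearing in $\lambda$, with the ``hesitation'' $i_{k-1}=i_k$ being exactly the configuration that makes the relevant coefficient equal to $+2$ (since $\langle \varpi_i, \alpha_i^\vee\rangle = 1$ and $\langle \alpha_i, \alpha_i^\vee\rangle = 2$) and thereby flips the sign of the crucial inequality. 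I expect the cleanest route is to prove the two directions by contrapositive: (i) if $\mathcal{I}$ contains a hesitant $\lambda$-walk as a subword $(i_{j-1}, i_j, \dots, i_\ell)$, exhibit an explicit maximal cone $\sigma_S$ (determined by $S$ = the set of indices in that subword, with appropriate up/down choices) for which one of the defining inequalities of $\psi_D$ fails, so $D$ is not basepoint-free; (ii) conversely, if $\mathcal{I}$ is hesitant-$\lambda$-walk-avoiding, verify by the downward induction on $k$ that \emph{every} inequality holds, the inductive step being precisely where walk-avoidance guarantees the alternating sum stays in range.

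The main obstacle will be step (ii), the sufficiency direction: organizing the double family of inequalities (over subsets $S$ and over rays $\rho$) so that the induction closes, and in particular identifying the right ``potential'' or auxiliary quantity $\phi_k(S)$ whose monotonicity along the recursion encodes walk-avoidance. The subtlety is that a non-walk (a ``jump'' in the Dynkin diagram) makes the corresponding Cartan integer vanish, which \emph{truncates} the dependence on earlier letters — so the inequalities are not literally about the whole word but about maximal walk-segments, and the bookkeeping of where these segments start and whether they reach a $\lambda$-supported letter is where care is needed. A secondary point to be careful about is that we only need basepoint-freeness (not ampleness), so we must not overshoot and prove the strict inequalities; the hesitation condition $i_0 = i_1$ is exactly calibrated to the non-strict boundary case, and getting that boundary exactly right — equality allowed, strict violation forbidden — is the crux of matching the combinatorics to the geometry. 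Once the recursion is set up correctly, each individual inequality is a routine finite computation in the root system, so I would relegate those to a lemma and keep the main argument at the level of the inductive structure.
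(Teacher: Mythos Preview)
Your overall strategy --- reduce to the Harada--Yang criterion that the cube is untwisted iff every coordinate $m_{\sigma,k}\ge 0$, then argue both directions combinatorially --- is exactly the paper's, and your choice of $\sigma$ in direction (i) (supported on the indices of the hesitant-walk subword) is the paper's choice as well. Two corrections are worth making.

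First, you have the relative difficulty of the two directions reversed. Your direction (ii), which you flag as the main obstacle, is actually the cleaner one, and the paper dispatches it by contrapositive rather than by a forward induction: if some $m_{\sigma,k}<0$, take $k$ maximal with this property, so that $m_{\sigma,j}\ge 0$ for all $j>k$; then $\ell_k - \sum_{j>k}c_{kj}m_{\sigma,j}<0$ forces the existence of some $j_1>k$ with $c_{kj_1}>0$ (i.e.\ $i_{j_1}=i_k$, the hesitation) and $m_{\sigma,j_1}>0$. The key sub-lemma --- and this is the ``potential $\phi_k(S)$'' you are searching for --- is that whenever $m_{\sigma,p}>0$ with all later coordinates nonnegative, one can greedily extract indices $p=j_1<j_2<\cdots<j_s$ with each $c_{j_tj_{t+1}}<0$ and $\ell_{j_s}>0$, i.e.\ a $\lambda$-walk starting at position $p$. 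Prepending $j_0=k$ then yields the hesitant $\lambda$-walk. Your forward induction can be made to work, but only after isolating this same lemma, and the contrapositive packaging is shorter.

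Second, your direction (i) is not as routine as you suggest. With your $\sigma$, the value $m_{\sigma,j_0}$ depends on the Cartan integers along the entire walk, and these are not all $-1$ outside the simply-laced types; moreover, for a non-minimal walk there are cross-terms $c_{j_pj_q}$ with $|p-q|\ge 2$ that need not vanish. The paper first passes to a \emph{minimal} hesitant $\lambda$-walk (the walking part visits each vertex at most once, and no earlier letter appears in $\lambda$), which forces $c_{j_pj_q}=0$ for $|p-q|\ge 2$ within the walking part and $\ell_{j_t}=0$ for $t<s$; the recursion then collapses to $m_{\sigma,j_t}=-c_{j_tj_{t+1}}m_{\sigma,j_{t+1}}$ and $m_{\sigma,j_0}=-\bigl(2m_{\sigma,j_1}+c_{j_0j_2}m_{\sigma,j_2}\bigr)$, after which a short case check (on whether and in which direction the walk crosses a double or triple edge, in types $B$, $C$, $F_4$, $G_2$) gives $m_{\sigma,j_0}<0$. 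The minimality reduction is what makes this computation close, so it is not a detail to be deferred to a lemma.
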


We note that ``pattern avoidance'' is an important notion in the study of Schubert
varieties and Schubert calculus, first pioneered by Lakshmibai and Sandhya 
\cite{LakshmibaiSandhya} and further studied by many others (see e.g. 
\cite{AbeBilley} and references therein). It would be interesting to
explore the relation between our notion of
hesitant-$\lambda$-walk-avoidance with the other types of pattern
avoidance in the theory of flag and Schubert varieties.

We additionally remark that Kiritchenko recently has defined \emph{divided
  difference operators $D_i$ on polytopes} and, using these $D_i$ 
inductively together with a fixed choice of reduced word decomposition
for the longest element in the Weyl group of $G$, she constructs (possibly virtual) polytopes whose lattice
points encode the character of irreducible $G$-representations
\cite[Theorem 3.6]{Kiritchenko}.
Kiritchenko's virtual polytopes are generalizations of 
both Gel'fand-Cetlin polytopes and the 
Grossberg-Karshon twisted polytopes. 
It would be interesting to explore whether our methods can be
further generalized to study Kiritchenko's virtual polytopes (see
Section~\ref{sec:questions}).

This paper is organized as follows. In Section~\ref{sec:background} we
recall the necessary definitions and background from previous
papers. In particular, we recall the results of the first author and
Jihyeon Yang \cite[Proposition 2.1 and Theorem 2.4]{Harada-Yang:2014a} which characterize the
untwistedness of the Grossberg-Karshon twisted cube in terms of the
Cartier data associated to a certain toric divisor on a toric
variety; this is a key tool for our proof. 
In Section~\ref{sec:walks} we introduce the notions of
diagram walks and hesitant $\lambda$-walks and make the statement of
our main theorem. 
We prove the 
the necessity of hesitant-$\lambda$-walk-avoidance in
Section~\ref{sec:necessity}. The proof of sufficiency, which occupies
Section~\ref{sec:sufficiency}, is in part a case-by-case analysis according to
Lie type. We briefly record some open questions 
in Section~\ref{sec:questions}.

\medskip

\noindent \textbf{Acknowledgements.} 
We thank Jihyeon
Jessie Yang for useful conversations and Professor Dong Youp Suh for his support throughout the project. 
The first author was partially supported by an NSERC Discovery Grant (Individual),
an Ontario Ministry of Research
and Innovation Early Researcher Award, a Canada Research Chair
(Tier 2) award, and a Japan Society for the Promotion of Science
Invitation Fellowship for Research in Japan (Fellowship ID
L-13517). 
The second author was supported by the National Research Foundation of Korea (NRF) Grant funded by the Korean Government (MOE) (No. NRF-2013R1A1A2007780).
The first author additionally thanks the Osaka City
University Advanced Mathematics Institute for its hospitality while
part of this research was conducted.

\section{Background}\label{sec:background}

We begin by recalling the definition of \textbf{twisted cubes} given
by Grossberg and Karshon \cite[\S 2.5]{Grossberg-Karshon}. We follow the exposition in \cite{Harada-Yang:2014a}.
Fix a
positive integer $n$. A twisted
cube is a pair $(C(\mfc, \mfl), \rho)$ where $C(\mfc, \mfl)$
is a subset of $\R^n$ and $\rho: \R^n \to \R$ is a density function
with support precisely equal to $C(\mfc, \mfl)$. Here $\mfc =
\{c_{jk}\}_{1 \leq j < k \leq n}$ and
$\mfl = \{\ell_1, \ell_2, \ldots, \ell_n\}$ are fixed integers. (The general definition in \cite{Grossberg-Karshon} only requires the $\ell_i$ to be real numbers, but since we restrict attention to the cases arising from representation theory, our $\ell_i$ will always be integers.) 
In order to simplify the notation in what follows, we define
the following functions on $\R^n$: 
\begin{equation}\label{eq:def-A} 
\begin{split} 
A_n(x) = A_n(x_1, \ldots, x_n) & = \ell_n \\
A_j(x) = A_j(x_1, \ldots, x_n) & = \ell_j - \sum_{k > j} c_{jk} x_k
\textup{ for all } 1 \leq j \leq n-1. 
\end{split} 
\end{equation}
We also define a function $\textup{sgn}: \R \to \{\pm 1\}$ by
$\textup{sgn}(x) = 1$ for $x<0$ and $\textup{sgn}(x) = -1$ for $x
\geq 0$. 

We now give the precise definition. 

\begin{definition}\label{definition:twisted cube}
Let $n, \mfc, \mfl$ and $A_j$ be as above. Let $C(\mfc, \mfl)$ denote
the following subset of $\R^n$: 
\begin{equation}
  \label{eq:defC}
  C(\mfc, \mfl) := \{ x = (x_1, \ldots, x_n) \in \R^n \mid \textup{
    for all } 1 \leq j \leq n, A_j(x) < x_j < 0 \textup{ or } 0 \leq
  x_j \leq A_j(x) \} \subseteq \R^n. 
\end{equation}
Moreover we define a density function $\rho: \R^n \to \R$ by 
\begin{equation}
  \label{eq:def-rho}
  \rho(x) =
  \begin{cases}
    (-1)^n \prod_{k=1}^n \textup{sgn}(x_k) & \textup{ if } x \in
    C(\mfc, \mfl) \\
   0 & \textup{ else.} 
  \end{cases}
\end{equation}
Evidently $\textup{supp}(\rho) = C(\mfc, \mfl)$. We call the pair
$(C(\mf, \mfl), \rho)$ the \textbf{twisted cube associated to $\mfc$
  and $\mfl$.} 
\end{definition}

A twisted cube may not be a cube in
the standard sense. In particular, the set $C$ may be neither convex
nor closed, as the following example shows. See also the discussion in \cite[\S 2.5]{Grossberg-Karshon}.

\begin{example}\label{example}
  Let $n=2$ and let $\mfl = (\ell_1 = 3, \ell_2 = 5)$ and $\mfc =
  \{c_{12} = 1\}$. Then
\[
C = \{ (x_1, x_2) \in \R^2 \, \mid \, 0 \leq x_2 \leq 5 \textup{ and }
( 3 - x_2 < x_1 < 0 \textup{ or } 0 \leq x_1 \leq 3 - x_2 ) \}.
\]
See Figure~\ref{figure:first example}. The value of the density function $\rho$ is recorded within each region.

\setlength{\unitlength}{0.5cm}
\begin{figure}[h]
\centering
\begin{minipage}{1\textwidth}
\centering
\begin{picture}(4,6)(0,0)
\put(-2,0){\vector(1,0){7}}
\put(5.1, -0.1){\tiny{$x_1$}}
\curve(0,-1, 0,0)
\put(0,5){\vector(0,1){1}}
\put(-0.2,6.2){\tiny{$x_2$}}
\put(3,-0.2){$\bullet$}\put(3,-0.7){\tiny{$3$}}
\put(-0.2,5){$\circ$}\put(0.2,5.1){\tiny{$5$}}
\put(-0.2,3){$\bullet$}
\put(-2.2,5){$\circ$}
\thicklines
\curve(-1.9,5.2,-0.2,5.2)
\curve(0,3.1,3.1,0)
\curve(0,0,3.1,0)
\curve(0,3.1,0,0)
\multiput(0,3.1)(0,0.4){5}{\line(0,1){0.2}}
\curve(-0.2,3.3,-0.4,3.5)\curve(-0.6,3.7,-0.8,3.9)
\curve(-1,4.1,-1.2,4.3)\curve(-1.4,4.5,-1.6,4.7)
\curve(-1.8,4.9,-1.9,5)
\put(0.7,1){\tiny{$+1$}}
\put(-0.8,4.3){\tiny{$-1$}}

\end{picture}

\end{minipage}

\captionof{figure}{}\label{figure:first example}

\end{figure}

Note in particular that $C$ does \emph{not} contain the points $\{ (0, x_2) \mid 3 <
x_2 < 5 \}$ and the points $\{ (x_1, x_2) \, \mid \, 3 < x_2 < 5
\textup{ and } x_1 = 3 - x_2 \}$, so $C$ is not closed, and it is
also not convex. 
\end{example}

As mentioned in the introduction, the main goal of this note is to give necessary and sufficient conditions for the \emph{untwistedness} of the twisted cube, stated in terms of the combinatorics of the defining parameters. The following makes the notion precise. 

\begin{definition}\label{definition:untwisted} (cf. \cite[Definition 2.2]{Harada-Yang:2014a})
We say that 
Grossberg-Karshon twisted cube $(C=C(\mfc, \mfl), \rho)$ is
\textbf{untwisted} if 
$C$ is a closed convex
polytope, and the support for $\rho$ is
constant and equal to $1$ on $C$ and $0$ elsewhere. We say the twisted
cube is \textbf{twisted} if it is not untwisted. 
\end{definition}

The main result of \cite{Harada-Yang:2014a} characterizes the untwistedness of the Grossberg-Karshon twisted cube in terms of the basepoint-freeness of a certain toric divisor on a toric variety constructed from the data of $\mfc$ and $\mfl$, which in turn can be stated in terms of the so-called Cartier data $\{m_\sigma\}$ associated to the divisor. In particular, in this paper we will not require the geometric perspective; instead we work with the integer vectors $m_\sigma$, which can be derived directly from the constants $\mfc$ and $\mfl$. Before quoting the relevant result from \cite{Harada-Yang:2014a} we need some terminology.

Let $\{e_1^+,\dots,e_n^+\}$ be the standard basis of $\R^n$. 
For $\sigma = (\sigma_1,\dots,\sigma_n) \in%
\{+,-\}^n$, define $m_{\sigma} = (m_{\sigma,1}, \ldots, m_{\sigma,n}) = \sum m_{\sigma,k} e_k^+ \in \Z^n$ as 
follows, using the functions $A_k(x)$ defined in~\eqref{eq:def-A}. 
\begin{equation}\label{eq:def m_sigma}
m_{\sigma,k} = 
\begin{cases}
0 & \textrm{ if } \sigma_k = + \\
A_k(m_{\sigma,k+1},\dots,m_{\sigma,n}) & \textrm{ if } \sigma_k = -.
\end{cases}
\end{equation}
We will also need a certain polytope $P_D$ as follows:  
\begin{equation}
  \label{eq:def P_D}
 P_D = \{ x \in \R^n \, \mid \, 0 \leq x_j \leq
  A_j(x) \textup{ for all } 1 \leq j \leq n \} \subseteq \R^n. 
\end{equation}

With this notation in place we can quote the following.

\begin{theorem}\label{thm-HY} (cf. \cite[Proposition 2.1]{Harada-Yang:2014a})
  Let $n, \mfc$ and $\mfl$ be as above and let $(C(\mfc, \mfl),\rho)$ denote the
  corresponding Grossberg-Karshon twisted polytope. 
Then $(C(\mfc, \mfl), \rho)$ is untwisted if and only if 
$m_{\sigma, k} \geq 0$ for all $\sigma \in \{+,-\}^n$ and for 
all $k$ with $1 \leq k \leq n$.  
\end{theorem}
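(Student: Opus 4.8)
\medskip
\noindent\textbf{Proof proposal.}
The plan is to prove, by induction on $n$ (the base case $n=0$ being trivial), the slightly stronger statement that for any twisted cube $C = C(\mfc,\mfl)$ the four conditions (i) $(C,\rho)$ is untwisted; (ii) $C = P_D$; (iii) $m_\sigma \in P_D$ for all $\sigma \in \{+,-\}^n$; (iv) $C = \operatorname{conv}\{m_\sigma : \sigma \in \{+,-\}^n\}$ are all equivalent. Granting this, Theorem~\ref{thm-HY} follows because (iii) is exactly the condition that $m_{\sigma,k}\geq 0$ for all $\sigma$ and $k$: on the one hand $P_D \subseteq \{x : x_k \geq 0\ \forall k\}$; on the other hand, since each $m_{\sigma,k}$ equals $0$ (if $\sigma_k = +$) or $A_k(m_\sigma)$ (if $\sigma_k = -$), the inequalities $m_{\sigma,k}\geq 0$ together with the identity $A_k(m_\sigma) = m_{\sigma',k}$ — where $\sigma'$ is $\sigma$ with its $k$-th entry changed to $-$, which leaves the later coordinates of $m$ unchanged — give both bounds $0 \leq m_{\sigma,k}\leq A_k(m_\sigma)$ defining $P_D$.

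The inductive engine is to fiber $C$ over the lower-dimensional twisted cube obtained by forgetting the first coordinate. Write $x = (x_1, x')$ with $x' = (x_2,\dots,x_n)$, and let $C' = C(\mfc',\mfl')$ be built from $\mfc' = \{c_{jk}\}_{2\leq j<k\leq n}$ and $\mfl' = \{\ell_2,\dots,\ell_n\}$, with its polytope $P_D'$, density $\rho'$ and Cartier data $m'_\tau$. Since $A_j$ for $j\geq 2$ does not involve $x_1$ and agrees with the corresponding function for $C'$, we have $C = \{(x_1,x') : x' \in C',\ A_1(x') < x_1 < 0 \text{ or } 0\leq x_1\leq A_1(x')\}$, $m_{\sigma,k} = m'_{\sigma_{\geq 2},k}$ for $k \geq 2$, and $m_{\sigma,1}\in\{0,A_1(m'_{\sigma_{\geq 2}})\}$ with $A_1(m'_\tau) = m_{(-,\tau),1}$. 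The key lemma I would establish is that $(C,\rho)$ is untwisted if and only if $C'$ is untwisted and $A_1 \geq 0$ on all of $C'$. For the backward direction, the inductive hypothesis gives $C' = P_D'$; since $A_1\geq 0$ there, the fiber over each $x'\in P_D'$ is the closed interval $[0,A_1(x')]$, so $C = P_D$, a closed convex polytope on which every coordinate is $\geq 0$, hence $\rho \equiv 1$. For the forward direction: if some $x'\in C'$ had $A_1(x') < 0$, the fiber over $x'$ would be the open interval $(A_1(x'),0)$, and letting $x_1 \to 0^-$ inside it, the points $(x_1,x')\in C$ converge to $(0,x')$, which lies in $C$ because $C$ is closed — yet $\rho$ jumps across these points, since $\mathrm{sgn}$ is discontinuous at $0$ with $\mathrm{sgn}(0) = -1$ while $\mathrm{sgn}\equiv +1$ just to the left, contradicting $\rho\equiv 1$ on $C$. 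Hence $A_1\geq 0$ on $C'$, so $C\cap\{x_1 = 0\} = \{0\}\times C'$ is a closed convex polytope, and comparing $\rho$ with $\rho'$ (the factor $\mathrm{sgn}(0) = -1$ exactly absorbs the shift from $(-1)^n$ to $(-1)^{n-1}$) shows $\rho'\equiv 1$ on $C'$, so $C'$ is untwisted.

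Given this lemma, the cycle (ii)$\Rightarrow$(i)$\Rightarrow$(ii)$\Rightarrow$(iv)$\Rightarrow$(iii)$\Rightarrow$(ii) closes by feeding the inductive hypothesis for $C'$ through the fibration. The implications (ii)$\Rightarrow$(i) and (i)$\Rightarrow$(ii) are the two directions of the lemma combined with $C' = P_D'$ from the induction. For (ii)$\Rightarrow$(iv): from $C=P_D$ the lemma gives $C' = P_D'$ and $A_1 \geq 0$ on $P_D'$, and by induction $P_D' = \operatorname{conv}\{m'_\tau\}$; then any $(x_1,x')\in P_D = \{x'\in P_D',\ 0\leq x_1\leq A_1(x')\}$ is a convex combination of the points $m_{(+,\tau)} = (0, m'_\tau)$ and $m_{(-,\tau)} = (A_1(m'_\tau), m'_\tau)$, obtained by writing $x' = \sum_\tau \lambda_\tau m'_\tau$ convexly, $x_1 = (1-\mu)A_1(x')$ with $\mu\in[0,1]$, and using that $A_1$ is affine. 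For (iv)$\Rightarrow$(iii): then $m_\sigma\in C$ for every $\sigma$, and for a point of the form $m_\sigma$ the open-interval alternative $A_k(m_\sigma) < m_{\sigma,k} < 0$ in the definition of $C$ cannot occur in any coordinate $k$ (since $m_{\sigma,k}$ is $0$ or $A_k(m_\sigma)$), so $0\leq m_{\sigma,k}\leq A_k(m_\sigma)$ for all $k$, i.e.\ $m_\sigma\in P_D$. For (iii)$\Rightarrow$(ii): $m_\sigma\in P_D$ forces $m'_{\sigma_{\geq 2}}\in P_D'$ for every $\sigma$ and $A_1(m'_\tau)\geq 0$ for every $\tau$; by induction $C' = P_D' = \operatorname{conv}\{m'_\tau\}$, so $A_1\geq 0$ on all of $C'$, and the lemma yields $C = P_D$.

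The step I expect to be the crux is the forward direction of the key lemma: that closedness of $C$ forces $A_1 \geq 0$ on $C'$. This is the only place the full strength of the hypothesis ``$C$ is closed'' enters, and it rests on the discontinuity of $\mathrm{sgn}$ at the origin, which is easy to mishandle. A secondary nuisance is the coordinate-shift bookkeeping between the $n$- and $(n-1)$-dimensional data (the identities $m_{\sigma,k} = m'_{\sigma_{\geq 2},k}$ and $A_1(m'_\tau) = m_{(-,\tau),1}$, and the matching of the two density functions), together with the clean verification that $P_D'$ is the convex hull of the $m'_\tau$ — which is precisely why condition (iv) is carried along in the induction.
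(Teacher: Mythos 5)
Theorem~\ref{thm-HY} is cited, not proved, in this paper; it is a reformulation of \cite[Proposition~2.1]{Harada-Yang:2014a}, whose proof runs through toric geometry: one builds a toric variety (a Bott tower) and a torus-invariant divisor $D$ on it from the data $\mfc,\mfl$, identifies the $m_\sigma$ with the Cartier data of $D$, shows untwistedness of $(C(\mfc,\mfl),\rho)$ to be equivalent to basepoint-freeness of $D$, and then invokes the standard toric criterion that $D$ is basepoint-free iff every $m_\sigma$ lies in the polytope $P_D$. Your proof takes a genuinely different, purely elementary route, replacing the toric machinery with an induction on $n$ that fibers $C$ over the twisted cube $C'$ in the last $n-1$ coordinates. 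The argument is correct: the key lemma [untwisted $C$ $\Leftrightarrow$ untwisted $C'$ and $A_1\geq 0$ on $C'$] is the right inductive hinge, and your identification of the crux — that when $A_1(x')<0$ the open fiber $(A_1(x'),0)$ has the limit point $(0,x')$ outside $C$, which is incompatible with closedness — is sound (the appeal to the discontinuity of $\rho$ is a slightly roundabout way to say that $(0,x')$ fails the defining inequalities of $C$, but it reaches the same contradiction). The bookkeeping identities $m_{\sigma,k}=m'_{\sigma_{\geq 2},k}$ for $k\geq 2$ and $A_1(m'_\tau)=m_{(-,\tau),1}$ hold because each $A_j$ depends only on $x_{j+1},\dots,x_n$; carrying condition (iv) through the induction is indeed what lets you upgrade $A_1\geq 0$ from the finitely many vertices $m'_\tau$ to all of $C'=\operatorname{conv}\{m'_\tau\}$ via affineness of $A_1$; and your translation between (iii) and the theorem's stated inequality $m_{\sigma,k}\geq 0$ via the flip $\sigma\mapsto\sigma'$ (which leaves $m_{\sigma,j}$ unchanged for $j>k$) is exactly right. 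What the toric route buys is integration into the Bott-tower framework that motivates the whole construction and connects it to line bundles and Newton-Okounkov bodies; what your argument buys is a self-contained proof visible entirely inside $\R^n$, requiring no algebraic geometry.
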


Recall that the goal of this note is to analyze the case when the
defining parameters for the Grossberg-Karshon twisted polytope arise
from certain representation-theoretic data. We now briefly describe
how to derive the $\mfc$ and $\mfl$ in this case. 

Following \cite{Grossberg-Karshon}, let $G$ be a complex semisimple
simply-connected linear algebraic group
of rank $r$ over an
algebraically closed field $\mf{k}$.
Choose a Cartan subgroup $H\subset G$, and a Borel subgroup
Let $\{\al_1,\ldots, \al_r\}$ denote the simple roots,
$\{\al_1^{\vee}, \ldots, \al_r^{\vee}\}$ the coroots, 
and $\{\varpi_1, \ldots, \varpi_r\}$ the fundamental weights (characterized by the relation
$\langle\varpi_i,\al_j^{\vee}\rangle=\de_{ij}$). Let $s_{\alpha_i} \in
W$ denote the simple reflection in the Weyl group corresponding to the
root $\alpha_i$.

Fix a choice $\lambda = \lambda_1 \varpi_1 + \dots + \lambda_r
\varpi_r$ in the weight lattice, where $\lambda_i \in \Z$. 
Let $\mathcal{I} = (i_1, \ldots, i_n)$ be a sequence of elements in
$[r]:=\{1,2,\ldots,r\}$; this corresponds to a (not necessarily
reduced) decomposition of an element $w = s_{\al_{i_1}} s_{\al_{i_2}}
\cdots s_{\al_{i_n}}$ in $W$. For simplicity, we introduce the notation $\beta_j := \alpha_{i_j}$, so $\beta_j$ is the $j$-th simple root 
appearing in the word decomposition. 
For such $\lambda$ and $\mathcal{I}$
we define constants $\mfc, \mfl$ by the formulas (cf. \cite[\S 3.7]{Grossberg-Karshon})
\begin{equation}
  \label{eq:def cjk rep}
  c_{jk}=\lee \beta_k, \beta_j^{\vee} \ree 
\end{equation}
for $1 \leq j < k \leq n$, and
\begin{equation}
  \label{eq:def ell rep}
\ell_1=\lee \lambda,\beta_1^{\vee}\ree, \dots, \ell_n=\lee \lambda,\beta_n^{\vee}\ree.
\end{equation}
Note that if the
$j$-th simple reflection in the given
 word decomposition
$\mathcal{I}$ is equal to $\alpha_i$, then $\ell_j = \la_i$, and that
the constants $c_{jk}$ are matrix entries in the Cartan matrix of
$G$.

The following example illustrates these definitions.

\begin{example}
  Consider $G = SL(3,\C)$ with positive roots $\{\alpha_1,
  \alpha_2\}$. Let $\lambda = 2 \varpi_1 + \varpi_2$ and $\mathcal{I} = (1,2,1)$. 
Then 
  $(\beta_1, \beta_2, \beta_3) = (\alpha_1, \alpha_2, \alpha_1)$ and we
  have
  \begin{equation}
   \begin{split}
    c_{12} & = \langle \alpha_2, \alpha_1^{\vee} \rangle = -1 \\
    c_{13} & = \langle \alpha_1, \alpha_1^{\vee} \rangle = 2 \\
    c_{23} & = \langle \alpha_1, \alpha_2^{\vee} \rangle = -1 \\
    \mfl = (\ell_1, \ell_2, \ell_3) & = (\langle \lambda, \alpha_1^{\vee} \rangle = 2, \langle \lambda, \alpha_2^{\vee} \rangle = 1, \langle \lambda, \alpha_1^{\vee} \rangle = 2).
    \\
   \end{split}
  \end{equation}
\end{example}

As mentioned in the introduction, in the setting above Grossberg and Karshon derive a Demazure-type character formula for the irreducible $G$-representation corresponding to $\lambda$, expressed as a sum over the lattice points $\Z^n \cap C(\mfc, \mfl)$ in the Grossberg-Karshon twisted cube $(C(\mfc, \mfl),\rho)$ \cite[Theorem 5 and Theorem 6]{Grossberg-Karshon}. The lattice points appear with a plus or minus sign according the density function $\rho$. Hence their formula 
is a \emph{positive} formula if $\rho$ is constant and equal to $1$
 on all
of $C(\mfc,\mfl)$. 
From the point of view of representation theory it
is therefore of interest to determine conditions on the weight
$\lambda$ and the
 word decomposition $\mathcal{I} = (i_1,i_2,\dots,i_n)$ for an element $w = s_{i_1}s_{i_2}\dots s_{i_n}$ 
such that the associated
Grossberg-Karshon twisted cube is in fact untwisted. 
This is the motivation for this note.

\section{Diagram walks, hesitant walk avoidance, and the statement of the main theorem}\label{sec:walks}

In order to state our main theorem it is useful to introduce some
terminology. In what follows, we fix an ordering on the simple roots as
in Table~\ref{Dynkin}; our conventions agree with that in the
standard textbook of Humphreys \cite{Humphreys}. In particular, given
an index $i$ with $1 \leq i \leq r$ where $r$ is the rank of $G$, we
may refer to its corresponding simple reflection $s_i :=
s_{\alpha_i}$, where the index $i$ refers to the ordering of the roots
in Table~\ref{Dynkin}. 

\begin{center}
  \begin{tabular}[h]{|c|c|}
  \hline
  $\Phi$ & Dynkin diagram \\
  \hline
  \hline
  $A_r$ $(r \geq 1)$ &
\begin{picture}(110,15)
	\multiput(5,5)(25,0){5}{\circle{5}}
	\multiput(7.5,5)(25,0){2}{\line(1,0){20}}
	\multiput(57.5,5)(8,0){3}{\line(1,0){4}}
	\put(82.5,5){\line(1,0){20}}
	
	\put(3,-7){\small{$1$}}
	\put(27,-7){\small{$2$}}
	\put(52,-7){\small{$3$}}
	\put(70,-7){\small{$r-1$}}
	\put(102,-7){\small{$r$}}
\end{picture}
  \\
  $B_r$ $(r \geq 2)$ &
\begin{picture}(110,20)
	\multiput(5,5)(25,0){5}{\circle{5}}
	\multiput(32.5,5)(8,0){3}{\line(1,0){4}}
	\put(7.5,5){\line(1,0){20}}
	\put(57.5,5){\line(1,0){20}}
	\multiput(82.5,4)(0,2){2}{\line(1,0){20}}
	
	\put(90,2.5){$>$}
	
	\put(3,-7){\small{$1$}}
	\put(27,-7){\small{$2$}}
	\put(45,-7){\small{$r-2$}}
	\put(70,-7){\small{$r-1$}}
	\put(102,-7){\small{$r$}}
\end{picture}
  \\
  $C_r$ $(r \geq 3)$ &  
\begin{picture}(110,20)
	\multiput(5,5)(25,0){5}{\circle{5}}
	\multiput(32.5,5)(8,0){3}{\line(1,0){4}}
	\put(7.5,5){\line(1,0){20}}
	\put(57.5,5){\line(1,0){20}}
	\multiput(82.5,4)(0,2){2}{\line(1,0){20}}
	
	\put(90,2.5){$<$}
	
	\put(3,-7){\small{$1$}}
	\put(27,-7){\small{$2$}}
	\put(45,-7){\small{$r-2$}}
	\put(70,-7){\small{$r-1$}}
	\put(102,-7){\small{$r$}}
\end{picture}
  \\
  $D_r$ $(r \geq 4)$ & 
\begin{picture}(110,35)
	\multiput(5,5)(25,0){4}{\circle{5}}
	\put(105,15){\circle{5}}
	\put(105,-5){\circle{5}}
	\multiput(32.5,5)(8,0){3}{\line(1,0){4}}
	\put(7.5,5){\line(1,0){20}}
	\put(57.5,5){\line(1,0){20}}
	\put(82.3,7.3){\line(5,2){20}}
	\put(82.3,2.7){\line(5,-2){20}}	

	\put(3,-7){\small{$1$}}
	\put(27,-7){\small{$2$}}
	\put(45,-7){\small{$r-3$}}
	\put(70,-7){\small{$r-2$}}
	\put(97,5){\small{$r-1$}}
	\put(102,-15){\small{$r$}}
\end{picture}
  \\
  $E_6$ & 
\begin{picture}(110,40)
	\multiput(5,5)(25,0){5}{\circle{5}}
	\multiput(7.5,5)(25,0){4}{\line(1,0){20}}
	\put(55,25){\circle{5}}
	\put(55,7.5){\line(0,1){15}}
	
	\put(3,-7){\small{$1$}}
	\put(27,-7){\small{$3$}}
	\put(52,-7){\small{$4$}}
	\put(77,-7){\small{$5$}}
	\put(102,-7){\small{$6$}}
	\put(60,20){\small{$2$}}
\end{picture}
  \\
  $E_7$ & 
\begin{picture}(135,40)
	\multiput(5,5)(25,0){6}{\circle{5}}
	\multiput(7.5,5)(25,0){5}{\line(1,0){20}}
	\put(55,25){\circle{5}}
	\put(55,7.5){\line(0,1){15}}
	
	\put(3,-7){\small{$1$}}
	\put(27,-7){\small{$3$}}
	\put(52,-7){\small{$4$}}
	\put(77,-7){\small{$5$}}
	\put(102,-7){\small{$6$}}
	\put(127,-7){\small{$7$}}
	\put(60,20){\small{$2$}}
\end{picture}
 \\
  $E_8$ & 
\begin{picture}(160,40)
	\multiput(5,5)(25,0){7}{\circle{5}}
	\multiput(7.5,5)(25,0){6}{\line(1,0){20}}
	\put(55,25){\circle{5}}
	\put(55,7.5){\line(0,1){15}}
	
	\put(3,-7){\small{$1$}}
	\put(27,-7){\small{$3$}}
	\put(52,-7){\small{$4$}}
	\put(77,-7){\small{$5$}}
	\put(102,-7){\small{$6$}}
	\put(127,-7){\small{$7$}}
	\put(152,-7){\small{$8$}}
	\put(60,20){\small{$2$}}
\end{picture}
  \\
  $F_4$ & 
\begin{picture}(85,20)
	\multiput(5,5)(25,0){4}{\circle{5}}
	\put(7.5,5){\line(1,0){20}}
	\put(57.5,5){\line(1,0){20}}
	\multiput(32.5,4)(0,2){2}{\line(1,0){20}}
	
	\put(40,2.5){$>$}
	
	\put(3,-7){\small{$1$}}
	\put(27,-7){\small{$2$}}
	\put(52,-7){\small{$3$}}
	\put(77,-7){\small{$4$}}
\end{picture}
  \\
  \raisebox{3mm}{$G_2$} & 
\begin{picture}(35,30)
	\multiput(5,15)(25,0){2}{\circle{5}}
	\multiput(7.5,13.5)(0,1.5){3}{\line(1,0){20}}
	
	\put(15,12.5){$<$}
	
	\put(3,3){\small{$1$}}
	\put(27,3){\small{$2$}}
\end{picture}
  \\
  \hline
\end{tabular}
\captionof{table}{Dynkin diagrams for all Lie types.}\label{Dynkin}
\end{center}

\begin{definition}
  Let $\mathcal{I} = (i_1, i_2, \ldots, i_n) \in [r]^n$ be a (not necessarily
  reduced) word decomposition of an element $w = s_{i_1} s_{i_2} \cdots s_{i_n}$ of the Weyl group
  $W$. We say that
  $\mathcal{I}$ is a \textbf{diagram walk} (or \textbf{walk})
  if successive simple roots are adjacent in the corresponding Dynkin
  diagram, or more precisely, for each $j \in [n-1] = \{1 \leq j \leq
  n-1\}$, the two successive roots $\al_{i_j}$ and $\al_{i_{j+1}}$ are
  distinct, and 
 there is an edge in the corresponding Dynkin diagram connecting
 $\alpha_{i_j}$ and $\alpha_{i_{j+1}}$. We call $i_1$ (or
  $\alpha_{i_1}$) the \textbf{initial root (of the diagram walk
    $\mathcal{I}$)} and denote it by $IR(\mathcal{I})$. We call $i_n$ (or $\alpha_{i_n}$)
  the \textbf{final root (of the diagram walk $\mathcal{I}$)} and
  denote it $FR(\mathcal{I})$. 
\end{definition}

\begin{example}\label{example:diagram walks}
\begin{enumerate} 
  \item In type A, the words $s_2 s_3 s_4 s_5 s_4 s_3$ and $s_1 s_2
    s_1 s_2 s_3$ are both diagram walks. Note that the second word is
    not reduced. 
 \item In type B, $s_{r-2} s_{r-1} s_r$ is a diagram walk. 
  \item In type $E_8$, $s_1s_3s_4s_2s_4s_5$ is a diagram walk.  
\end{enumerate}
\end{example}

In what follows, we also find it useful to consider words which are
`almost' diagram walks, except that the word begins with a repetition
(thus disqualifying it from being a walk), i.e. the initial root
appears twice. 

\begin{definition}
  Let $\mathcal{I} = (i_0, i_1, i_2, \ldots, i_n)$ be a (not necessarily
  reduced) word decomposition of an element $w = s_{i_0} s_{i_1} \cdots s_{i_n}$ of the Weyl group
  $W$. We say that
  $\mathcal{I}$ is a \textbf{hesitant (diagram) walk} if 
  \begin{itemize}
  \item the length of the word is at least $2$, i.e. $n \geq 1$, 
  \item the first two roots are the same, i.e., $i_0 = i_1$, and 
  \item the subword $(i_1, \ldots, i_n)$ is a diagram walk. 
  \end{itemize}
  In other words, except for the `hesitation' at the first step, the
  remainder of the word is a diagram walk. We refer to the subword $(i_1, \ldots,
  i_n)$ as the \textbf{walking component} of the hesitant walk. 
\end{definition}

A few remarks are in order. First, we emphasize that a hesitant walk,
despite the terminology, is not actually a diagram walk; it becomes a diagram
walk only after deleting the first entry in the word. Furthermore, it is clear that a hesitant
(diagram) walk is never a reduced word decomposition (because of the
two repeated roots at the beginning). 
On the other hand, it is possible for a reduced word decomposition to
\emph{contain} a hesitant walk as a subword: for instance, for $G =
SL(4,\C)$, the reduced word decomposition $s_1s_2s_3s_1s_2s_1$ for the longest
element in the Weyl group $S_4$ contains $s_1s_1 s_2$ as a subword, which is a hesitant
walk. 

\begin{definition}
  Let $\mathcal{I} = (i_1, i_2, \ldots, i_n)$ be a
  word decomposition of an element $w = s_{i_1} s_{i_2} \cdots
  s_{i_n}$ of the Weyl group
  $W$. We say that
  $\mathcal{I}$ is \textbf{hesitant-walk-avoiding} if there is no 
  subword $\mathcal{J} = (i_{j_0}, i_{j_1}, \ldots,
  i_{j_s})$ of $\mathcal{I}$ which is a hesitant walk. 
  \end{definition}
  
  \begin{example}
    Let $G = SL(4,\C)$ with Weyl group $S_4$. The reduced
    word decomposition $s_1s_2s_3$ is hesitant-walk-avoiding.
 \end{example}
    
In what follows we will also be interested in dominant weights
$\lambda$ in the character lattice $X(T)$ associated to $G$. As in
Section~\ref{sec:background}, we may express $\lambda$ as a linear combination of
the fundamental weights $\varpi_1, \ldots, \varpi_r$ corresponding to
the simple roots $\alpha_1, \ldots, \alpha_r$. Thus we write 
\[
\lambda = \lambda_1 \varpi_1 + \cdots + \lambda_r \varpi_r
\]
and since we assume $\lambda$ is dominant, $\lambda_i \geq 0$ for all $i=1,
\ldots, r$. 

\begin{definition}
  Let $\lambda$ be as above. We say that a simple root $\alpha_i$
  \textbf{appears in $\lambda$} if the corresponding coefficient is
  strictly positive, i.e. 
  \begin{equation}
    \label{eq:appears in lambda}
    \lambda_i = \langle \lambda, \alpha_i^{\vee} \rangle > 0. 
  \end{equation}
\end{definition}

We now introduce some terminology which relates diagram walks and
hesitant walks to the dominant weight $\lambda$. 

\begin{definition}
  Let $\lambda$ and $\mathcal{I}$ be as above. 
 We will say that $\mathcal{I}$ is a \textbf{$\lambda$-walk} if
  \begin{itemize}
  \item $\mathcal{I}$ is a diagram walk, and 
  \item the final root $FR(\mathcal{I})$ of the walk $\mathcal{I}$
    appears in $\lambda$. 
  \end{itemize}
Similarly, we say that $\mathcal{I}$ is a \textbf{hesitant
  $\lambda$-walk} if it is a hesitant walk, and the final root of its
walking component appears in $\lambda$. Finally, a word $\mathcal{I}$
is \textbf{hesitant-$\lambda$-walk-avoiding} if there is no subword 
$\mathcal{J}$ of $\mathcal{I}$ which is a hesitant $\lambda$-walk. 
\end{definition}

 \begin{example}
    Let $G = SL(4,\C)$ with Weyl group $S_4$. Consider the reduced
    word decomposition $\mathcal{I} = (1,2,3,1,2,1)$ of the 
    longest element $w_0 = s_1 s_2 s_3 s_1 s_2 s_1$ of
    $S_4$ and $\lambda = 3\varpi_3$. Then $\mathcal{I}$ is
hesitant-$\lambda$-walk-avoiding. 
  \end{example}

Given the terminology introduced above we may now state our 
main theorem. 

\begin{theorem}\label{theorem:main}
Let $\mathcal{I} = (i_1, i_2, \ldots, i_n)$ be a word decomposition of
an element 
$w=s_{i_1}\cdots s_{i_n}$ of $W$ and let 
$\lambda = \lambda_1 \varpi_1 + \lambda_2 \varpi_2 + \cdots + \lambda_r \varpi_r$ be a
dominant weight. Let $\mfc = \{c_{jk}\}$ and $\mfl = (\ell_1, \ldots,
\ell_n)$ be determined from $\lambda$ and $\mathcal{I}$ as in~\eqref{eq:def cjk rep} and~\eqref{eq:def
  ell rep}. 
Then the corresponding Grossberg-Karshon twisted cube $(C(\mfc, \mfl),
\rho)$ is untwisted if
and only if $\mathcal{I}$ is hesitant-$\lambda$-walk-avoiding. 

\end{theorem}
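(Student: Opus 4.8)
The key reduction is Theorem~\ref{thm-HY}: the twisted cube is untwisted if and only if $m_{\sigma,k} \geq 0$ for every $\sigma \in \{+,-\}^n$ and every $k \in [n]$. So the entire problem becomes a combinatorial analysis of the integers $m_{\sigma,k}$, which are defined by the downward recursion~\eqref{eq:def m_sigma} from the Cartan-matrix entries $c_{jk} = \langle \beta_k, \beta_j^\vee \rangle$ and the numbers $\ell_j = \langle \lambda, \beta_j^\vee \rangle = \lambda_{i_j}$. I would first unwind this recursion: fixing $\sigma$, let $S = \{k : \sigma_k = -\}$, and observe that $m_{\sigma,k} = 0$ for $k \notin S$, while for $k \in S$ one gets $m_{\sigma,k} = \ell_k - \sum_{k' \in S,\, k' > k} c_{kk'}\, m_{\sigma,k'}$. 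Because the $c_{jk}$ with $j < k$ are off-diagonal Cartan entries, they are $\leq 0$; the recursion therefore mixes signs, and the first goal is to understand exactly when a negative value can first be produced as $k$ decreases.

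For the \textbf{necessity} direction (Section~\ref{sec:necessity}), I would show: if $\mathcal{I}$ contains a subword $\mathcal{J} = (i_{j_0}, i_{j_1}, \ldots, i_{j_s})$ that is a hesitant $\lambda$-walk, then I can exhibit an explicit $\sigma$ with some $m_{\sigma,k} < 0$. The natural candidate is to take $\sigma_k = -$ exactly on the indices $\{j_0, j_1, \ldots, j_s\}$ of the bad subword (and $+$ elsewhere). Then, walking the recursion from $k = j_s$ downward through the walking component: at each consecutive pair of the walk the relevant Cartan entry $c$ satisfies $c \leq -1$ (adjacency in the Dynkin diagram), while non-consecutive pairs in a diagram walk are non-adjacent and contribute $c = 0$; so the values $m_{\sigma,j_s}, m_{\sigma,j_{s-1}}, \ldots$ grow along the walk (starting from $\ell_{j_s} = \lambda_{FR} \geq 1$ since the final root appears in $\lambda$), and finally the hesitation $i_{j_0} = i_{j_1}$ forces $c_{j_0 j_1} = \langle \beta_{j_1}, \beta_{j_0}^\vee\rangle = 2$, which subtracts $2 m_{\sigma,j_1}$ and overwhelms $\ell_{j_0}$, giving $m_{\sigma,j_0} < 0$. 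The only subtlety is bookkeeping the contributions of intermediate indices and confirming they vanish or have the right sign; in a pure diagram walk the "two steps back" root is never adjacent, so its $c$ is $0$, but one must check that the sign pattern of the accumulated $m$'s is controlled — likely by an induction along the walk showing all the $m_{\sigma,j_t}$ for $t \geq 1$ are positive.

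For the \textbf{sufficiency} direction (Section~\ref{sec:sufficiency}), I would prove the contrapositive: if some $m_{\sigma,k} < 0$, then $\mathcal{I}$ contains a hesitant $\lambda$-walk. Fix $\sigma$ and let $k$ be the \emph{largest} index with $m_{\sigma,k} < 0$; so $\sigma_k = -$, and $m_{\sigma,k'} \geq 0$ for all $k' > k$. From $m_{\sigma,k} = \ell_k - \sum_{k' \in S, k' > k} c_{kk'} m_{\sigma,k'} < 0$ and $\ell_k = \lambda_{i_k} \geq 0$, at least one term $c_{kk'} m_{\sigma,k'}$ is strictly positive, forcing $c_{kk'} > 0$, i.e. $c_{kk'} = 2$, i.e. $i_{k'} = i_k$: a repetition. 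Then I would trace back \emph{why} $m_{\sigma,k'} > 0$ — recursively, a positive value of $m_{\sigma,\cdot}$ at an index with $\sigma = -$ must have been built up through adjacent-root contributions, and chasing this chain of adjacencies (each contributing $c \leq -1$) back down to an index whose $\ell$ is positive yields exactly a diagram walk ending at a root appearing in $\lambda$, with the initial repetition $i_k = i_{k'}$ furnishing the hesitation. The main obstacle is that this "trace-back" is not quite deterministic: a positive $m_{\sigma,k'}$ can arise from several competing terms, some with $c = 2$ (more repetitions) and some with $c \leq -1$ along the diagram, and the off-diagonal entries in types $B, C, F_4, G_2$ can equal $-2$ or $-3$, which changes the arithmetic. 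This is why Section~\ref{sec:sufficiency} is a case-by-case analysis by Lie type: in the multiply-laced cases one must check carefully that the larger Cartan entries still cannot conspire to create a negative value without producing a genuine hesitant $\lambda$-walk, and that the direction of the arrow (which root is long/short) is consistent with the claimed walk. I would organize the argument by choosing, among all chains witnessing $m_{\sigma,k} < 0$, a minimal one and arguing it must be a hesitant $\lambda$-walk, handling the simply-laced types uniformly and then dispatching $B_r, C_r, F_4, G_2$ individually.
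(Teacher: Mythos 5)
Your plan is essentially the paper's proof. Both directions reduce to the sign condition of Theorem~\ref{thm-HY}; your trace-back argument (take the maximal $k$ with $m_{\sigma,k}<0$, observe that a strictly positive term $c_{kk'}m_{\sigma,k'}$ forces $c_{kk'}=2$, i.e.\ a repetition, then chase the positivity of $m_{\sigma,k'}$ through a chain of negative Cartan entries down to an index with $\ell>0$) is exactly Lemmas~\ref{lemma:existence of lambda walk} and~\ref{lemma:existence hesitant lambda walk}; and your explicit $\sigma$ equal to $-$ precisely on the indices of the hesitant walk is exactly the construction in Section~\ref{sec:sufficiency}. Your worry that the trace-back is ``not deterministic'' resolves itself: terms with $c_{kk'}=2$ and $m_{\sigma,k'}\geq 0$ only push $m_{\sigma,k}$ downward, so they can never be the source of a positive value, and only the \emph{signs} of the $c_{jk}$ matter in that direction.

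Three details to repair in the write-up. (1) The case-by-case Lie-type analysis belongs to the \emph{construction} direction (hesitant $\lambda$-walk $\Rightarrow$ some $m_{\sigma,k}<0$), not to the trace-back: that is where the magnitudes $-2,-3$ of the off-diagonal Cartan entries enter the arithmetic. (2) In that construction you must first replace the hesitant $\lambda$-walk by a \emph{minimal} one (walking component visiting each Dynkin vertex at most once, no intermediate root appearing in $\lambda$), as in Definition~\ref{def:minimal}; otherwise your claim that the root two steps back is never adjacent fails --- e.g.\ a walking component $(1,2,1)$ revisits a vertex and contributes $c=2$. (3) Even for a minimal walk, the hesitation index is an exception to that claim: since $\beta_{j_0}=\beta_{j_1}$ is adjacent to $\beta_{j_2}$, the entry $c_{j_0 j_2}<0$ does not vanish and must be kept, as in~\eqref{eq-ifpart}; one then checks
\begin{equation*}
m_{\sigma,j_0} \;=\; -\bigl(2m_{\sigma,j_1}+c_{j_0 j_2}m_{\sigma,j_2}\bigr)\;=\;-\lvert c_{j_0 j_2}\rvert\, m_{\sigma,j_2}\;<\;0,
\end{equation*}
using $m_{\sigma,j_1}=\lvert c_{j_1 j_2}\rvert m_{\sigma,j_2}$ and $c_{j_0 j_2}=c_{j_1 j_2}$ (the length-$2$ case $s=1$ needs the separate computation of Corollary~\ref{corollary:length 2 hesitant walk implies twisted}). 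None of these affects the overall correctness of the strategy.
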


The proof of the above theorem occupies Sections~\ref{sec:necessity}
and~\ref{sec:sufficiency}.

\section{Proof of the main theorem: necessity}\label{sec:necessity}

We begin the proof of Theorem~\ref{theorem:main} 
by first proving the ``only if'' part of the
statement, i.e., that hesitant-$\lambda$-walk-avoidance implies the
untwistedness of the Grossberg-Karshon twisted cube. 

We need some preliminary lemmas. Recall that the $m_\sigma
 = (m_{\sigma,1}, \ldots, m_{\sigma,n})$ are
integer vectors defined by~\eqref{eq:def m_sigma}
associated to the defining constants $\mfc$ and $\mfl$ of the twisted cube.

\begin{lemma}\label{lemma:existence of lambda walk}
Let $\{c_{ij}\}_{1 \leq i < j \leq n}$ and 
$\ell_1,\dots,\ell_n$ be fixed integers. 
Assume that $\ell_i \geq 0$ for all $i$.
If there exists an element $\sigma$ of $\{+,-\}^n$ and $k \in [n]$ 
such that $m_{\sigma, k} > 0$ and $m_{\sigma, i} \geq 0$ for 
$i > k$, then there exists an increasing sequence $\mathcal{J}$ of indices $1 \leq
j_1 < j_2 < \cdots < j_s \leq n$, with $s \geq 1$, such that 
\begin{enumerate} 
\item ${j_1} = k$,
\item $\ell_{j_s} > 0$, and 
\item  $c_{{j_t} j_{t+1}}  < 0$ for $t = 1,\dots,s-1$. 
\end{enumerate} 
\end{lemma}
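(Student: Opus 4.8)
The plan is to construct the desired sequence $\mathcal{J} = (j_1 < j_2 < \cdots < j_s)$ directly, by a greedy ``backward tracing'' procedure that starts at $k$ and uses only the defining recursion~\eqref{eq:def m_sigma} together with the sign hypotheses $\ell_i \ge 0$ and $m_{\sigma,i} \ge 0$ for $i > k$.

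First I would isolate the basic inductive step. Suppose $j$ is an index with $k \le j \le n$ such that $m_{\sigma,j} > 0$ and $m_{\sigma,i} \ge 0$ for all $i > j$; note that for $j = k$ this is precisely the hypothesis of the lemma, and for $j > k$ the second condition is automatic from the hypothesis. Since $m_{\sigma,j} > 0$ we cannot have $\sigma_j = +$ (which would force $m_{\sigma,j} = 0$), so $\sigma_j = -$ and the recursion gives
\[
m_{\sigma,j} = \ell_j - \sum_{l > j} c_{jl}\, m_{\sigma,l} > 0 .
\]
Now either $\ell_j > 0$, in which case $j$ is a valid terminal index; or $\ell_j = 0$ (using $\ell_j \ge 0$), and then $\sum_{l > j} c_{jl}\, m_{\sigma,l} = -m_{\sigma,j} < 0$. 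Since each $m_{\sigma,l}$ with $l > j$ is $\ge 0$, a strictly negative sum forces the existence of some $l > j$ with $c_{jl}\, m_{\sigma,l} < 0$, hence $c_{jl} < 0$ and $m_{\sigma,l} > 0$. Thus the basic step either certifies $\ell_j > 0$, or produces a strictly larger index $j' := l$ with $c_{jj'} < 0$, $m_{\sigma,j'} > 0$, and $m_{\sigma,i} \ge 0$ for all $i > j'$ (automatic, since $j' > k$).

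Then I would iterate this starting from $j_1 := k$: while $\ell_{j_t} = 0$, let $j_{t+1}$ be an index supplied by the basic step applied to $j_t$; halt at $t = s$, the first time $\ell_{j_s} > 0$. The indices produced are strictly increasing and bounded above by $n$, so the procedure terminates; in fact it cannot run past $n$, since for $j = n$ the sum over $l > n$ is empty and $m_{\sigma,n} > 0$ forces $\ell_n = m_{\sigma,n} > 0$. On termination we have $j_1 = k$ (condition (1)), $\ell_{j_s} > 0$ (condition (2)), $c_{j_t j_{t+1}} < 0$ for $t = 1, \ldots, s-1$ (condition (3)), and $s \ge 1$ trivially, which is exactly the conclusion.

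I do not anticipate a real obstacle: the lemma is essentially a bookkeeping argument, and the only points requiring care are (a) observing that $m_{\sigma,j} > 0$ forces $\sigma_j = -$, so that the recursion~\eqref{eq:def m_sigma} is genuinely available at each node visited by the trace-back, and (b) phrasing the induction so that the hypothesis ``$m_{\sigma,i} \ge 0$ for $i > j_t$'' is visibly preserved (which is immediate once $j_t > k$). For orientation it is worth noting that, in the representation-theoretic setting of the main theorem where $c_{jk} = \langle \beta_k, \beta_j^{\vee}\rangle$ and $\ell_j = \langle \lambda, \beta_j^{\vee}\rangle$, condition (3) says the successive roots $\beta_{j_t}$ and $\beta_{j_{t+1}}$ are distinct and adjacent in the Dynkin diagram, while condition (2) says $\beta_{j_s}$ appears in $\lambda$ — so $\mathcal{J}$ is the combinatorial skeleton of a $\lambda$-walk, which is why this lemma drives the necessity half of Theorem~\ref{theorem:main}.
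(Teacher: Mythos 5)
Your proposal is correct and follows essentially the same route as the paper's proof: both arguments observe that $m_{\sigma,j}>0$ forces $\sigma_j=-$, use the recursion $m_{\sigma,j}=\ell_j-\sum_{l>j}c_{jl}m_{\sigma,l}$ together with $\ell_j\ge 0$ and $m_{\sigma,l}\ge 0$ to extract a larger index $l$ with $c_{jl}<0$ and $m_{\sigma,l}>0$ whenever $\ell_j=0$, and iterate until an index with $\ell_{j_s}>0$ is reached. Your explicit remark that the trace-back cannot pass $n$ (since the empty sum forces $\ell_n=m_{\sigma,n}>0$) makes the termination step slightly more airtight than the paper's brief appeal to boundedness, but the argument is the same.
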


\begin{proof}
Let $\sigma$ and $k$ be as above. 
We may explicitly construct the subsequence $\mathcal{J}$ as
follows. First suppose $\ell_k > 0$. In this case, the subsequence
$\mathcal{J} = (j_1 = k)$ satisfies the required three conditions (the
third being vacuous), so we are done. If on the other hand $\ell_k =
0$, we set $j_1 = k$ and then define $j_2$ as follows. By assumption
$m_{\sigma,k} > 0$ so we know $\sigma_k = -$, and by definition of the
$m_\sigma$ we know 
\begin{equation}
\begin{split}
m_{\sigma,k} & = \ell_k - \sum_{i > k} c_{ki} m_{\sigma,i} \\
 & = - \sum_{i > k} c_{ki} m_{\sigma,i}.
\end{split}
\end{equation}
Since $m_{\sigma,i} \geq 0$ for $i \geq k$ by assumption, in order for
$m_{\sigma,k}$ to be strictly positive there must exist an index $J >
k$ with $c_{kJ} < 0$ and $m_{\sigma, J} >0$. Choose $j_2$ to be the
minimal such index. If $\ell_{j_2} > 0$, then the sequence
$\mathcal{J} = (j_1 = k, j_2)$ satisfies the required three conditions
and we are done. Otherwise, we may repeat the above argument as many
times as necessary (i.e. as long as $\ell_{j_t} = 0$). Since the
indices $j_t$ are bounded above by $n$, this process must stop,
i.e. there must exist some $s \geq 1$ such that the sequence
$\mathcal{J} = (j_1, \ldots, j_s)$ found in this manner satisfies the
requirements. 

\end{proof}

In the case when the constants $\mfc$ and $\mfl$ are obtained from 
the data of a weight $\lambda$ and a word $\mathcal{I}$ we can 
interpret Lemma~\ref{lemma:existence of lambda walk} using the
terminology introduced in Section~\ref{sec:walks}.

\begin{corollary}\label{corollary:existence of lambda walk}
Let $\mathcal{I} = (i_1, i_2, \ldots, i_n)$ be a word decomposition of
an element 
$w = 
s_{i_1}\cdots s_{i_n}$ of $W$ and let 
$\lambda = \lambda_1 \varpi_1 + \lambda_2 \varpi_2 + \cdots + \lambda_r \varpi_r$ be a
dominant weight, i.e. $\lambda_i \geq 0$ for all $i$. 
Let $\mfc, \mfl$ and
$\{m_{\sigma}\}_{\sigma \in \{+,-\}^n}$ be determined from
$\mathcal{I}$ and $\lambda$ as in~\eqref{eq:def cjk rep},~\eqref{eq:def
  ell rep} and~\eqref{eq:def m_sigma}. 
If there exists an element $\sigma$ of $\{+,-\}^n$ and $k \in [n]$ 
such that $m_{\sigma, k} > 0$ and $m_{\sigma, i} \geq 0$ for 
$i > k$, 
then there exists a subword 
$\mathcal{J}= (i_{j_1},i_{j_2},\dots,i_{j_s})$
of $\mathcal{I}$, of length at least $1$ (i.e. $s \geq 1$), such that 
$j_1 = k$ and $\mathcal{J}$ is a $\lambda$-walk (i.e. it is a diagram
walk and the final root $FR(\mathcal{J})$ appears in $\lambda$). 
\end{corollary}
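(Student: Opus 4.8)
The plan is to obtain the subword $\mathcal{J}$ directly from Lemma~\ref{lemma:existence of lambda walk} and then verify that the three conditions produced by that lemma are precisely the conditions defining a $\lambda$-walk, once the constants $\mfc$ and $\mfl$ are replaced by their representation-theoretic meaning. First I would check the hypothesis of Lemma~\ref{lemma:existence of lambda walk}: since $\lambda$ is dominant we have $\lee \lambda, \alpha_i^\vee \ree \geq 0$ for every simple coroot, so by~\eqref{eq:def ell rep} every $\ell_i = \lee \lambda, \beta_i^\vee \ree$ is $\geq 0$. Applying the lemma to the given $\sigma$ and $k$ yields an increasing sequence $k = j_1 < j_2 < \cdots < j_s \leq n$ with $s \geq 1$, with $\ell_{j_s} > 0$, and with $c_{j_t j_{t+1}} < 0$ for $t = 1, \ldots, s-1$. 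I claim $\mathcal{J} = (i_{j_1}, i_{j_2}, \ldots, i_{j_s})$ is the desired subword.

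Next I would show that $\mathcal{J}$ is a diagram walk. By~\eqref{eq:def cjk rep}, for each $t$ the number $c_{j_t j_{t+1}} = \lee \beta_{j_{t+1}}, \beta_{j_t}^\vee \ree$ is the Cartan integer $\lee \alpha_{i_{j_{t+1}}}, \alpha_{i_{j_t}}^\vee \ree$. Since a Cartan matrix has diagonal entries equal to $2$ and non-positive off-diagonal entries, the inequality $c_{j_t j_{t+1}} < 0$ forces $i_{j_t} \neq i_{j_{t+1}}$ (so the two successive roots of $\mathcal{J}$ are distinct) and also forces $\lee \alpha_{i_{j_{t+1}}}, \alpha_{i_{j_t}}^\vee \ree \neq 0$, which by the construction of the Dynkin diagram means there is an edge joining $\alpha_{i_{j_t}}$ and $\alpha_{i_{j_{t+1}}}$. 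Hence successive roots of $\mathcal{J}$ are distinct and adjacent in the Dynkin diagram, so $\mathcal{J}$ is a diagram walk; its initial root is $\alpha_{i_{j_1}} = \alpha_{i_k}$ and its final root $FR(\mathcal{J})$ is $\alpha_{i_{j_s}}$.

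Finally I would check that $FR(\mathcal{J})$ appears in $\lambda$. Using~\eqref{eq:def ell rep}, the condition $\ell_{j_s} > 0$ reads $\lee \lambda, \beta_{j_s}^\vee \ree = \lee \lambda, \alpha_{i_{j_s}}^\vee \ree > 0$, which is exactly the statement that $\alpha_{i_{j_s}} = FR(\mathcal{J})$ appears in $\lambda$. Thus $\mathcal{J}$ is a $\lambda$-walk with $j_1 = k$, which completes the proof. The only step requiring any attention is the Cartan-matrix/Dynkin-diagram dictionary in the middle paragraph — in particular the observation that a strictly negative Cartan entry simultaneously records the distinctness of the two roots and the presence of an edge between them — but this is standard, so I do not anticipate a genuine obstacle: this corollary is essentially a translation of Lemma~\ref{lemma:existence of lambda walk} from the language of $\mfc, \mfl$ into the language of $\mathcal{I}$ and $\lambda$.
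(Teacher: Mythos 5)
Your proposal is correct and follows essentially the same route as the paper: verify $\ell_i \geq 0$ from dominance, invoke Lemma~\ref{lemma:existence of lambda walk}, and translate $c_{j_t j_{t+1}} < 0$ and $\ell_{j_s} > 0$ into adjacency in the Dynkin diagram and the final root appearing in $\lambda$. Your explicit remark that a strictly negative Cartan entry also forces the two roots to be distinct (since diagonal entries are $2$) is a small point the paper leaves implicit, but otherwise the arguments coincide.
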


\begin{proof}
First observe that by the definition~\eqref{eq:def ell rep} of the
$\ell_i$ and by assumption on $\lambda$, we have $\ell_i \geq 0$ for
all $i$, and $\ell_i > 0$ exactly when $\beta_i$, the $i$-th simple
root in $\mathcal{I}$, appears in $\lambda$. Let $\sigma$ and $k$ be
as above. Then 
by Lemma~\ref{lemma:existence of lambda walk} there exists a subword 
$\mathcal{J}= (i_{j_1} = i_k, i_{j_2},\dots,i_{j_s})$ of length at
least $1$ 
such that the $j_1 = k$ and 
$FR(\mathcal{J})$ appears in $\lambda$.
It remains to check that $\mathcal{J}$ is a diagram walk. Recall that by
definition 
$c_{j\ell} = \lee \beta_\ell, \beta_j^{\vee} \ree$.
Hence $c_{j\ell} < 0$ if and only if 
there is an edge in the corresponding Dynkin diagram connecting the
roots $\alpha_{i_j}$ and $\alpha_{i_\ell}$, so by the conditions on
$\mathcal{J}$ in Lemma~\ref{lemma:existence of lambda walk} we see
that $\mathcal{J}$ is a diagram walk, as desired. 
\end{proof}

The next result is the main technical fact we need.

\begin{lemma}\label{lemma:existence hesitant lambda walk}
Let $\{c_{ij}\}_{1 \leq i < j \leq n}$ and 
$\ell_1,\dots,\ell_n$ be fixed integers and let $(C(\mfc, \mfl),
\rho)$ be the corresponding Grossberg-Karshon twisted cube. 
Assume that $\ell_i \geq 0$ for all $i$.
If $(C(\mfc, \mfl),\rho)$ is twisted, then there exists an increasing
subsequence
$\mathcal{J} = (j_0 < j_1 < \cdots < j_s)$ of indices of length at
least $2$ (i.e. $s \geq 1$) such that 
\begin{enumerate} 
\item $\ell_{j_s} > 0$, 
\item $c_{j_0 j_1}>0$, and 
\item $c_{{j_t} j_{t+1}}  < 0$ for all $t = 1,\dots,s-1$. 
\end{enumerate} 
\end{lemma}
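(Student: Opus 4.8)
The plan is to unpack the hypothesis that $(C(\mfc,\mfl),\rho)$ is twisted using Theorem~\ref{thm-HY}: since the twisted cube is twisted, there must exist some $\sigma \in \{+,-\}^n$ and some index $k$ with $m_{\sigma,k} < 0$. First I would use a minimality trick to get a cleaner such witness: among all pairs $(\sigma, k)$ with $m_{\sigma,k} < 0$, choose one for which $k$ is as large as possible. By this maximality, $m_{\sigma,i} \geq 0$ for all $i > k$ (otherwise we'd have a negative entry with a larger index, contradicting the choice). Set $j_0 := k$. This is the index where the ``hesitation'' will occur; the goal is then to produce the walking component starting just after $j_0$.

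Next I would analyze why $m_{\sigma,j_0}$ is negative. Since $m_{\sigma,j_0} < 0$, certainly $\sigma_{j_0} = -$, so by the defining recursion~\eqref{eq:def m_sigma},
\[
m_{\sigma,j_0} = \ell_{j_0} - \sum_{i > j_0} c_{j_0 i} m_{\sigma,i}.
\]
Because $\ell_{j_0} \geq 0$ by hypothesis and $m_{\sigma,i} \geq 0$ for all $i > j_0$, in order for the right-hand side to be strictly negative there must exist an index $J > j_0$ with $c_{j_0 J} > 0$ and $m_{\sigma,J} > 0$. Let $j_1$ be the minimal such index; then $c_{j_0 j_1} > 0$, giving condition (2). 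Now I would like to apply Lemma~\ref{lemma:existence of lambda walk} (or rather its proof) with the pair $(\sigma, j_1)$: we have $m_{\sigma,j_1} > 0$ and $m_{\sigma,i} \geq 0$ for all $i > j_1$ (since $i > j_1 > j_0 = k$ and we arranged all entries past $k$ to be nonnegative). Lemma~\ref{lemma:existence of lambda walk} then furnishes an increasing sequence $j_1 < j_2 < \cdots < j_s$ with $\ell_{j_s} > 0$ and $c_{j_t j_{t+1}} < 0$ for $t = 1, \ldots, s-1$, which are exactly conditions (1) and (3). Prepending $j_0$ yields the desired sequence $\mathcal{J} = (j_0 < j_1 < \cdots < j_s)$ of length $\geq 2$.

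One point that needs care is whether $s \geq 1$, i.e. that the sequence genuinely has length at least two; but this is automatic since we always have both $j_0$ and $j_1$, with $j_0 < j_1$. A second subtlety is that Lemma~\ref{lemma:existence of lambda walk} as stated produces a sequence with $j_1 = k'$ for the given starting index $k'$, so I should apply it with ``$k'$'' equal to our $j_1$, and simply relabel its output sequence indices upward by one. The main obstacle — or rather the key structural observation driving the whole argument — is the maximality choice of $k$: without it we could not guarantee the hypothesis $m_{\sigma,i} \geq 0$ for $i > j_1$ needed to invoke Lemma~\ref{lemma:existence of lambda walk}, and we could not extract the single ``bad'' positive Cartan entry $c_{j_0 j_1} > 0$ cleanly. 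Everything else is a direct unwinding of~\eqref{eq:def m_sigma} together with the nonnegativity of the $\ell_i$. Once Lemma~\ref{lemma:existence hesitant lambda walk} is in hand, the representation-theoretic translation (that $c_{j_0 j_1} > 0$ together with $c_{j_t j_{t+1}} < 0$ and $\ell_{j_s} > 0$ encodes a hesitant $\lambda$-walk) will proceed exactly as in Corollary~\ref{corollary:existence of lambda walk}, using that $c_{j\ell} > 0$ for $j < \ell$ in the Cartan matrix forces $\beta_j = \beta_\ell$ (the only positive off-diagonal contributions come from a repeated simple root), which produces the initial repetition $i_{j_0} = i_{j_1}$.
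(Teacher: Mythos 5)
Your proposal is correct and follows essentially the same route as the paper's proof: extract $(\sigma,k)$ with $m_{\sigma,k}<0$ and $k$ maximal, use the recursion and nonnegativity of $\ell_k$ and of the later $m_{\sigma,i}$ to find $j_1>k$ with $c_{kj_1}>0$ and $m_{\sigma,j_1}>0$, then invoke Lemma~\ref{lemma:existence of lambda walk} starting at $j_1$ and prepend $j_0=k$. The only (immaterial) differences are that you take $j_1$ minimal and maximize $k$ over all pairs $(\sigma,k)$ rather than for a fixed $\sigma$.
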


\begin{proof}
By Theorem \ref{thm-HY},
there exist an element $\sigma$ of $\{+,-\}^n$ and an index $k$ such that 
$m_{\sigma, k} < 0$. For such a choice of $\sigma$ we may assume
without loss of generality that $k$ is chosen to be the maximal such
index, i.e. that $m_{\sigma,k} < 0$ and $m_{\sigma, s} \geq 0$ for
$s>k$. 
Recall that by definition 
\[
m_{\sigma,k} = \ell_k - \sum_{s>k} c_{ks} m_{\sigma,s}.
\]
By assumption $m_{\sigma,k} < 0$ so we have $\sum_{s>k} c_{ks}
m_{\sigma,s} > \ell_k \geq 0$. Since also $m_{\sigma,s} \geq 0$ for
$s>k$, this implies that there exists some $p > k$ with $c_{kp} > 0$
and $m_{\sigma,p} > 0$. Applying Lemma~\ref{lemma:existence of lambda
  walk} we obtain an increasing sequence $(j_1 = p, j_2, \ldots, j_s)$
of indices with $s \geq 1$ such that $\ell_{j_s} > 0$ and $c_{j_t
  j_{t+1}} < 0$ for all $t=1, \ldots, s-1$. Then by choosing $j_0 = k
< j_1 = p$ and since $c_{j_0 j_1} = c_{kp} > 0$ by construction of
$p$, we obtain a sequence $\mathcal{J} = (j_0 =k, j_1=p, \ldots, j_s)$
satisfying the required conditions.

\end{proof}

The proof of
the ``only if'' part of Theorem~\ref{theorem:main} is a
straightforward consequence of the above lemma.

\begin{proof}[Proof of the ``only if" part of Theorem \ref{theorem:main}]\label{onlyif}
Suppose the Grossberg-Karshon twisted cube $(C(\mfc, \mfl),\rho)$ is
twisted. By the dominance assumption on $\lambda$ and by definition of the
$\ell_i$, we know $\ell_i \geq 0$ for all $i$. Thus we
may apply Lemma~\ref{lemma:existence hesitant lambda walk}. 
Note also $\ell_{j_s} > 0$ precisely when the root $\beta_{j_s}$ appears in $\lambda$. 
Moreover, by definition, we know that $c_{j_0 j_1} := \lee \beta_{j_1}, \beta_{j_0}^{\vee}
\ree >0$ if and only if $\beta_{j_0} = \beta_{j_1}$ (equivalently $i_{j_0}
= i_{j_1}$) and 
$c_{j_t j_{t+1}} < 0$ if and only if there is an edge in the corresponding
Dynkin diagram connecting the roots $\beta_{j_t}$ and
$\beta_{j_{t+1}}$.
Thus the subword 
$(i_{j_0}, i_{j_1}, \dots,i_{j_s})$ of $\mathcal{I}$ corresponding to the
subsequence $(j_0, j_1, \ldots, j_s)$ of indices obtained from
Lemma~\ref{lemma:existence hesitant lambda walk} is a hesitant
$\lambda$-walk, as desired. 
\end{proof}

\section{Proof of the main theorem: sufficiency}\label{sec:sufficiency}

We now proceed to prove the ``if'' part of the main theorem, i.e., that
untwistedness implies hesitant-$\lambda$-walk-avoidance. 
Part of the proof will be a case-by-case analysis of the possible Lie types of $G$. 

For convenience we recall the Cartan matrices for all
Lie types (see for example \cite[p.58-59]{Humphreys}). 
\begin{center}
\small{
\begin{tabular}[h]{ll}
$
A_r:~ \begin{bmatrix}
2 & -1 & 0 & & . & . & . & & & 0 \\
-1 & 2 & -1 & 0 & . & . & . & & & 0 \\
0 & -1 & 2 & -1 & 0 & . & . & . & & 0 \\
. & . & . & . & . & . & . & . & . & . \\
0 & 0 & 0 & & . & . & . & -1 & 2 & -1 \\ 
0 & 0 & 0 & & . & . & . & 0 & -1 & 2 
\end{bmatrix}$&
$
E_6:~\begin{bmatrix}
2 & 0 & -1 & 0 & 0 & 0 \\
0 & 2 & 0 & -1 & 0 & 0 \\
-1 & 0 & 2 & -1 & 0 & 0 \\
0 & -1 & -1 & 2 & -1 & 0 \\
0 & 0 & 0 & -1 & 2 & -1 \\
0 & 0 & 0 & 0 & -1 & 2
\end{bmatrix} $ \\
$B_r:~ \begin{bmatrix}
2 & -1 & 0 & & . & . & . & & & 0 \\
-1 & 2 & -1 & 0 & . & . & . & & & 0 \\
. & . & . & . & . & . & . & . & . & . \\
0 & 0 & 0 & & . & . & . & -1 & 2 & -2 \\
0 & 0 & 0 & & . & . & . & 0 & -1 & 2 
\end{bmatrix} $ &
$E_7:~\begin{bmatrix}
2 & 0 & -1 & 0 & 0 & 0 & 0\\
0 & 2 & 0 & -1 & 0 & 0 & 0\\
-1 & 0 & 2 & -1 & 0 & 0 & 0\\
0 & -1 & -1 & 2 & -1 & 0 & 0\\
0 & 0 & 0 & -1 & 2 & -1 & 0\\
0 & 0 & 0 & 0 & -1 & 2 & -1 \\
0 & 0 & 0 & 0 & 0 & -1 & 2 \\
\end{bmatrix} $ \\
$C_r:~ \begin{bmatrix}
2 & -1 & 0 & & . & . & . & & & 0 \\
-1 & 2 & -1 & 0 & . & . & . & & & 0 \\
. & . & . & . & . & . & . & . & . & . \\
0 & 0 & 0 & & . & . & . & -1 & 2 & -1 \\
0 & 0 & 0 & & . & . & . & 0 & -2 & 2 
\end{bmatrix} $ &
$E_8:~\begin{bmatrix}
2 & 0 & -1 & 0 & 0 & 0 & 0 & 0\\
0 & 2 & 0 & -1 & 0 & 0 & 0 & 0\\
-1 & 0 & 2 & -1 & 0 & 0 & 0 & 0\\
0 & -1 & -1 & 2 & -1 & 0 & 0 & 0\\
0 & 0 & 0 & -1 & 2 & -1 & 0 & 0\\
0 & 0 & 0 & 0 & -1 & 2 & -1 & 0\\
0 & 0 & 0 & 0 & 0 & -1 & 2 & -1\\
0 & 0 & 0 & 0 & 0 & 0 & -1 & 2\\
\end{bmatrix} $ \\
$D_r:~ \begin{bmatrix}
2 & -1 & 0 & & . & . & . & & & 0 \\
-1 & 2 & -1 &  & . & . & . & & & 0 \\
. & . & . & . & . & . & . & . & . & . \\
0 & 0 & & . & . & -1 & 2 & -1 & 0 & 0 \\
0 & 0 & & . & . & & -1 & 2 & -1 & -1 \\
0 & 0 & & . & . & & 0 & -1 & 2 & 0 \\
0 & 0 & & . & . & & 0 & -1 & 0 & 2  
\end{bmatrix} $ &
$F_4:~\begin{bmatrix}
2 & -1 & 0 & 0 \\
-1 & 2 & -2 & 0\\
0 & -1 & 2 & -1 \\
0 & 0 & -1 & 2
\end{bmatrix}
\qquad 
G_2:~\begin{bmatrix}
2 & -1 \\
-3 & 2
\end{bmatrix} $\\
\end{tabular}
}
\captionof{table}{Cartan matrices for all Lie types.}\label{Cartan matrices}
\end{center}

In the discussion below it will be useful to restrict attention 
to hesitant $\lambda$-walks which are minimal in an appropriate sense.
We make this precise in the definition below. 

\begin{definition}\label{def:minimal}
Let $\lambda$ be a dominant weight and let 
$\mathcal{I} = (i_0, \ldots, i_n)$ be a hesitant $\lambda$-walk. 
We say that 
  $\mathcal{I}$ is \textbf{minimal} 
   if 
\begin{enumerate} 
\item $\{i_1, \ldots, i_n\}$ are all
  distinct, i.e. the walking component of $\mathcal{I}$ visits any given vertex of the
  Dynkin diagram at most once, and 
\item if $n \geq 2$, then $\beta_{0}, \ldots, \beta_{{n-1}}$ do not appear in
  $\lambda$. 
\end{enumerate}  
\end{definition}

\begin{example}
Let $G = SL(6,\C)$. 
\begin{itemize} 
\item Let $\lambda = \varpi_2$. The hesitant
$\lambda$-walk $\mathcal{J} = (5, 5, 4, 3, 4, 3, 2)$ is not minimal
since the walking component revisits some vertices multiple times,
but the subword $\mathcal{J}' = (5, 5, 4, 3, 2)$ is minimal. 
\item Let $\lambda = \varpi_2 + \varpi_5$. In this case the above
  hesitant $\lambda$-walk $(5,5,4,3,2)$ is not minimal since
  $\beta_{0} = \beta_{1} = \al_5$ already appears in
  $\lambda$. The subword $(5,5)$ is minimal. 
\end{itemize} 
\end{example} 

It is clear from the definition that for any dominant
$\lambda \neq 0$ and a hesitant $\lambda$-walk $\mathcal{J}$, there
exists a subword $\mathcal{J}'$ of $\mathcal{J}$ which is minimal in
the sense of Definition~\ref{def:minimal}.

\begin{lemma}\label{lemma:minimality implications}
Let $\lambda \neq 0$ be a dominant weight and $\mathcal{J} = (i_{j_0},
i_{j_1}, \ldots, i_{j_s})$ a 
hesitant $\lambda$-walk. Let $\mfc, \mfl$ be the constants associated
to $\mathcal{J}$ and $\lambda$ as defined in~\eqref{eq:def cjk rep}
and~\eqref{eq:def ell rep}. 
If $\mathcal{J}$ is minimal, then 
\begin{enumerate} 
\item $c_{j_p j_q} = 0$ if $|p - q | \geq 2$ and $1 \leq p,q \leq s$,
  and
\item $\ell_{j_p}  = 0$ for $0 \leq p \leq s-1$ if $s \geq 2$. 
\end{enumerate} 
\end{lemma}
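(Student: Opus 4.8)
The plan is to unwind the definitions of minimality and translate each of the two conclusions directly into statements about the Cartan matrix entries $c_{j_p j_q} = \langle \beta_{j_q}, \beta_{j_p}^\vee\rangle$ and the integers $\ell_{j_p} = \langle \lambda, \beta_{j_p}^\vee\rangle$. Recall that the walking component of $\mathcal{J}$ is the diagram walk $(i_{j_1}, \ldots, i_{j_s})$, so consecutive roots $\beta_{j_t}, \beta_{j_{t+1}}$ are adjacent in the Dynkin diagram, and by minimality condition (1) the roots $\beta_{j_1}, \ldots, \beta_{j_s}$ are pairwise distinct. Hence the walking component traces out an (injective) walk along the edges of the Dynkin diagram.

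For part (1): fix $p, q$ with $1 \le p, q \le s$ and $|p - q| \ge 2$; I must show $\langle \beta_{j_q}, \beta_{j_p}^\vee\rangle = 0$, i.e. that the roots $\beta_{j_p}$ and $\beta_{j_q}$ are non-adjacent in the Dynkin diagram (and distinct, which follows from minimality (1)). The argument is that the Dynkin diagrams in Table~\ref{Dynkin} are all trees, so an injective walk on a tree cannot revisit a vertex, and two vertices of the walk that are $\ge 2$ steps apart along the walk cannot be joined by an edge — otherwise that edge together with the portion of the walk between them would form a cycle in the tree, a contradiction. (Equivalently: in a tree, the unique path between $\beta_{j_p}$ and $\beta_{j_q}$ is exactly the walk segment $\beta_{j_p}, \beta_{j_{p+1}}, \ldots, \beta_{j_q}$, which has length $|p-q| \ge 2$, so they are not adjacent.) Therefore there is no edge between them and $c_{j_p j_q} = 0$.

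For part (2): assume $s \ge 2$ and fix $p$ with $0 \le p \le s-1$; I must show $\ell_{j_p} = \langle \lambda, \beta_{j_p}^\vee\rangle = 0$. Since $\lambda$ is dominant we have $\langle \lambda, \beta_{j_p}^\vee\rangle \ge 0$, so it suffices to show $\beta_{j_p}$ does not appear in $\lambda$. For $p = 0$ note $\beta_{j_0} = \beta_{j_1}$ since $\mathcal{J}$ is a hesitant walk ($i_{j_0} = i_{j_1}$), so this case reduces to $p = 1$. For $1 \le p \le s-1$ (note $s - 1 \ge 1$), minimality condition (2) — which applies since the walking component has length $s \ge 2$ — states precisely that $\beta_{j_0}, \ldots, \beta_{j_{s-1}}$ do not appear in $\lambda$; in particular $\beta_{j_p}$ does not. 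Hence $\ell_{j_p} = 0$.

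I do not expect any real obstacle here: the lemma is essentially a bookkeeping translation of minimality into linear-algebraic data, and the only non-formal input is the observation that every Dynkin diagram is a tree (no cycles), which is visible from Table~\ref{Dynkin}. The mildly delicate point is handling the index bookkeeping carefully — keeping straight that the walking component is $(i_{j_1}, \ldots, i_{j_s})$ (not including $i_{j_0}$), that $\beta_{j_0} = \beta_{j_1}$, and that the $c_{j_p j_q}$ statement concerns indices in $\{1, \ldots, s\}$ while the $\ell_{j_p}$ statement concerns $\{0, \ldots, s-1\}$ — so I would write these out explicitly rather than leave them implicit.
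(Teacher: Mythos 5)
Your proof is correct and follows essentially the same route as the paper: part (1) rests on the observation that Dynkin diagrams have no cycles, so distinct vertices of the injective walking component that are at least two steps apart cannot be adjacent, and part (2) is a direct reading of minimality condition (2) (which in fact already covers $\beta_{j_0}$ explicitly, so your reduction of the $p=0$ case to $p=1$ is not needed, though it is harmless). The paper's proof is just a terser version of the same argument.
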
 

\begin{proof} 
By the minimality assumption, and since Dynkin diagrams have no loops, 
we know that if $\lvert p - q \rvert \geq 2$ and $1
\leq p,q \leq s$ (so $j_p$ and $j_q$ are in the walking component of
$\mathcal{J}$) then the roots $\beta_{j_p}$ are neither adjacent nor
equal. This implies the corresponding entry in the Cartan matrix is
$0$, as desired. 
The second statement is immediate from the minimality assumption 
since $\ell_{j_p} > 0$ exactly when
$\beta_{j_p}$ appears in $\lambda$. 
\end{proof}

We begin with a lemma. 

\begin{lemma}\label{lemma:length 2 hesitant walk implies twisted}
Let $\{c_{ij}\}_{1 \leq i < j \leq n}$ and 
$\ell_1,\dots,\ell_n$ be fixed integers and let $(C(\mfc, \mfl),
\rho)$ be the corresponding Grossberg-Karshon twisted cube. 
Assume that $\ell_i \geq 0$ for all $i$. 
If there exists two distinct indices $i$ and $j$, $1 \leq i < j \leq
n$, with $c_{ij} > 1$ and $\ell_i = \ell_j > 0$, then $(C(\mfc, \mfl),
\rho)$ is twisted. 
\end{lemma}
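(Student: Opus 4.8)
The plan is to invoke the criterion of Theorem~\ref{thm-HY}: to conclude that $(C(\mfc,\mfl),\rho)$ is twisted it suffices to exhibit a single $\sigma \in \{+,-\}^n$ and an index $k$ for which $m_{\sigma,k} < 0$. The natural candidate is the $\sigma$ that ``turns on'' exactly the two coordinates $i$ and $j$ coming from the hypothesis: set $\sigma_i = \sigma_j = -$ and $\sigma_k = +$ for every other $k$.

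With this choice I would compute the vector $m_\sigma$ via the recursion~\eqref{eq:def m_sigma}, working from $k=n$ down to $k=1$. For every $k \notin \{i,j\}$ we have $\sigma_k = +$ and hence $m_{\sigma,k} = 0$; in particular $m_{\sigma,k} = 0$ for all $k>j$ and for all $k$ with $i<k<j$. At $k=j$ we then get $m_{\sigma,j} = A_j(m_{\sigma,j+1},\dots,m_{\sigma,n}) = \ell_j - \sum_{l>j} c_{jl} m_{\sigma,l} = \ell_j$, which is positive by hypothesis. At $k=i$ the only nonzero entry among $m_{\sigma,i+1},\dots,m_{\sigma,n}$ is $m_{\sigma,j} = \ell_j$, so
\[
m_{\sigma,i} = A_i(m_{\sigma,i+1},\dots,m_{\sigma,n}) = \ell_i - c_{ij}\,m_{\sigma,j} = \ell_i - c_{ij}\,\ell_j = \ell_i\,(1 - c_{ij}),
\]
using $\ell_i = \ell_j$. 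Since $c_{ij}$ is an integer with $c_{ij} > 1$, i.e.\ $c_{ij} \geq 2$, and $\ell_i > 0$, this yields $m_{\sigma,i} = \ell_i(1 - c_{ij}) \leq -\ell_i < 0$.

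Therefore $m_{\sigma,i} < 0$ for this $\sigma$, and Theorem~\ref{thm-HY} gives that $(C(\mfc,\mfl),\rho)$ is twisted, as claimed. There is essentially no real obstacle here: once one hits on the correct choice of $\sigma$ the argument is a short back-substitution, and the only thing worth checking is the boundary case $j = n$, in which $A_n \equiv \ell_n$ still forces $m_{\sigma,n} = \ell_j > 0$, so the computation is unchanged.
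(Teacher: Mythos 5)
Your proof is correct and takes exactly the same route as the paper: choose $\sigma$ with $\sigma_i = \sigma_j = -$ and $\sigma_k = +$ otherwise, back-substitute to get $m_{\sigma,j} = \ell_j$ and $m_{\sigma,i} = \ell_i(1-c_{ij}) < 0$, then invoke Theorem~\ref{thm-HY}. The only (harmless) additions are the explicit remark that $c_{ij}\in\Z$ forces $c_{ij}\ge 2$ and the check of the boundary case $j=n$.
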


\begin{proof}
By Theorem~\ref{thm-HY}, it suffices to show that there exists
an element $\sigma$ of $\{+,-\}^n$ and some $k$ with $1 \leq k \leq n$
such that $m_{\sigma, k} < 0$. 
Let $\sigma = (\sigma_1,\dots,\sigma_n) \in \{+,-\}^n$ be the 
element defined by
\[
\sigma_k = 
\begin{cases}
- & \textrm{ if } k = i \textrm{ or } j \\
+ & \textrm{ otherwise }
\end{cases}
\]
and consider the associated $m_\sigma = (m_{\sigma,1}, \ldots,
m_{\sigma,n})$. Then by definition of $\sigma$ and $m_{\sigma}$ we
have 
\begin{align*}
m_{\sigma,j} &= \ell_j - \sum_{s > j} c_{js}m_{\sigma,s} \\
m_{\sigma,i} &= \ell_i - \left( c_{ij}m_{\sigma,j} - \sum_{\substack{s
      > i \\  s \neq j}} c_{is}m_{\sigma,s} \right).
\end{align*}
Since $\sigma_k = +$ for $k \neq i,j$, we have that $m_{\sigma,k} = 0$ for 
$k \neq i,j$. Hence the above equations can be simplified to 
\begin{align*}
m_{\sigma,j} &= \ell_j \\
m_{\sigma,i} &= \ell_i - c_{ij}m_{\sigma,j} = \ell_i - c_{ij} \ell_j 
\end{align*}
By assumption $\ell_i = \ell_j$ so 
\[
m_{\sigma, i} = \ell_i(1 - c_{ij}). 
\]
Since $c_{ij} > 1$ and $\ell_i > 0$, we obtain $m_{\sigma,i} < 0$, as
desired. 
\end{proof}

As in the previous section, the above lemma can be interpreted in
terms of hesitant $\lambda$-walks. 

\begin{corollary}\label{corollary:length 2 hesitant walk implies twisted}
Let $\mathcal{I} = (i_1, i_2, \ldots, i_n)$ be a word decomposition of
an element 
$w = 
s_{i_1}\cdots s_{i_n}$ of $W$ and let 
$\lambda = \lambda_1 \varpi_1 + \lambda_2 \varpi_2 + \cdots + \lambda_r \varpi_r$ be a
dominant weight, i.e. $\lambda_i \geq 0$ for all $i$. 
Let $\mfc = \{c_{jk}\}, \mfl = (\ell_1, \ldots, \ell_n)$ and
$\{m_{\sigma}\}_{\sigma \in \{+,-\}^n}$ be determined from
$\mathcal{I}$ and $\lambda$ as in~\eqref{eq:def cjk rep},~\eqref{eq:def
  ell rep} and~\eqref{eq:def m_sigma} and let $(C(\mfc, \mfl), \rho)$
denote the corresponding Grossberg-Karshon twisted cube. If
$\mathcal{I}$ contains a subword $\mathcal{J} = (j_0, j_1)$ of length
$2$ which is a
hesitant $\lambda$-walk, then $(C(\mfc, \mfl),\rho)$ is twisted. 
\end{corollary}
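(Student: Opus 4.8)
The plan is to deduce Corollary~\ref{corollary:length 2 hesitant walk implies twisted} directly from Lemma~\ref{lemma:length 2 hesitant walk implies twisted}, by translating the representation-theoretic hypotheses into the hypotheses of that lemma. First I would observe that the dominance assumption on $\lambda$ together with the definition~\eqref{eq:def ell rep} of the $\ell_i$ guarantees $\ell_i = \langle \lambda, \beta_i^\vee \rangle \geq 0$ for all $i$, so the standing hypothesis of Lemma~\ref{lemma:length 2 hesitant walk implies twisted} is satisfied. Next, since $\mathcal{J} = (j_0, j_1)$ is a hesitant $\lambda$-walk of length $2$, by definition of a hesitant walk we have $i_{j_0} = i_{j_1}$, i.e. $\beta_{j_0} = \beta_{j_1} =: \alpha_i$ for some index $i \in [r]$, and the walking component $(i_{j_1})$ is a $\lambda$-walk, which forces its final root $\alpha_{i_{j_1}} = \alpha_i$ to appear in $\lambda$, i.e. $\lambda_i = \langle \lambda, \alpha_i^\vee \rangle > 0$.

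From this I would extract the two remaining numerical facts. Because $\beta_{j_0} = \beta_{j_1} = \alpha_i$ appears in $\lambda$, both $\ell_{j_0} = \langle \lambda, \beta_{j_0}^\vee \rangle$ and $\ell_{j_1} = \langle \lambda, \beta_{j_1}^\vee \rangle$ equal $\lambda_i > 0$, so in particular $\ell_{j_0} = \ell_{j_1} > 0$. And because $\beta_{j_0}^\vee = \beta_{j_1}^\vee = \alpha_i^\vee$, the constant $c_{j_0 j_1} = \langle \beta_{j_1}, \beta_{j_0}^\vee \rangle = \langle \alpha_i, \alpha_i^\vee \rangle = 2 > 1$. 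Thus the pair of indices $j_0 < j_1$ satisfies exactly the hypotheses $c_{j_0 j_1} > 1$ and $\ell_{j_0} = \ell_{j_1} > 0$ of Lemma~\ref{lemma:length 2 hesitant walk implies twisted}.

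Applying Lemma~\ref{lemma:length 2 hesitant walk implies twisted} to the indices $j_0, j_1$ then yields immediately that $(C(\mfc, \mfl), \rho)$ is twisted, completing the proof. There is essentially no obstacle here: the content is entirely in the bookkeeping of unwinding the definition of a hesitant $\lambda$-walk into the hypotheses of the preceding lemma, and the only point requiring a moment's care is noting that a repeated root gives a diagonal entry of the Cartan matrix, which is always $2$, hence strictly greater than $1$. (One subtlety worth flagging in passing: the indices $(j_0,j_1)$ in the corollary statement should be read as positions $j_0 < j_1$ in $\mathcal{I}$ with $i_{j_0}=i_{j_1}$, and the lemma is applied to the full constants $\mfc,\mfl$ of $\mathcal{I}$, not to a truncated subword; this is harmless since the $m_\sigma$ constructed in the lemma's proof sets $m_{\sigma,k}=0$ for all $k \neq j_0,j_1$ regardless of the ambient word length.)
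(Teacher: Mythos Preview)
Your proposal is correct and follows essentially the same approach as the paper: unwind the definition of a length-$2$ hesitant $\lambda$-walk to obtain $\beta_{j_0}=\beta_{j_1}$ (hence $c_{j_0 j_1}=2>1$) and $\ell_{j_0}=\ell_{j_1}>0$, then invoke Lemma~\ref{lemma:length 2 hesitant walk implies twisted}. Your write-up is more explicit about verifying the standing hypothesis $\ell_i\geq 0$ and about the role of the ambient word, but the argument is the same.
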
 

\begin{proof} 
By definition of a hesitant $\lambda$-walk, if $\mathcal{J} = (j_0,
j_1)$ is a hesitant $\lambda$-walk then $i_{j_0}=i_{j_1}$
(equivalently $\beta_{j_0} = \beta_{j_1}$) and
$\beta_{j_0}=\beta_{j_1}$ appears in $\lambda$. This implies $c_{j_0
  j_1}=2>1$ and $\ell_{j_0} = \ell_{j_1} > 0$. The result now follows
from Lemma~\ref{lemma:length 2 hesitant walk implies twisted}. 
\end{proof}

\begin{proof}[Proof of ``if" part of Theorem \ref{theorem:main}]
Suppose $\mathcal{J} = \{i_{j_0}, i_{j_1}, \cdots,i_{j_s}\}$ is a subword of
$\mathcal{I}$ which is a hesitant $\lambda$-walk. We may without loss
of generality assume that $\mathcal{J}$ is minimal in the sense of
Definition~\ref{def:minimal}. We then wish to show that $(C(\mfc,
\mfl),\rho)$ is twisted.  If the length of $\mathcal{J}$ is 2,
i.e. $s=1$, then this follows from 
Corollary~\ref{corollary:length 2 hesitant walk implies
twisted}. 
Thus we may now assume that the length is at least $3$, i.e. $s \geq
2$. 
To prove that $(C(\mfc, \mfl),\rho)$ is twisted, 
by Theorem \ref{thm-HY} it is 
enough to find an element $\sigma$ of $\{+,-\}^n$ and a $k \in [n]$
such that $m_{\sigma,k} < 0$. To achieve this, consider the element
$\sigma = (\sigma_1, \ldots, \sigma_n) \in \{+,-\}^n$ defined by 
\[
\sigma_p = 
\begin{cases}
- & \textrm{ if } p \in \{j_0, j_1,\dots,j_s\} \\
+ & \textrm{ otherwise.}
\end{cases}
\]
By definition of $m_\sigma$, we then have 
\begin{equation}\label{eq:find m negative} 
\begin{split}
m_{\sigma,j_{s}} &= \ell_{j_s} - \sum_{p > j_{s}} c_{j_{s} p} m_{\sigma,p} \\
m_{\sigma,j_t} &= \ell_{j_t} - \left( c_{j_t j_{t+1}}m_{\sigma, j_{t+1}} + \sum_{%
\substack{p > j_{t} \\ p \neq j_{t+1}}} c_{j_t p} m_{\sigma, p} \right) %
\textrm{ for } 1 \leq t \leq s-1 \\
m_{\sigma,j_0} &= \ell_{j_0} - %
\left( c_{j_0 j_1}m_{\sigma, j_1} + c_{j_0 j_2}m_{\sigma, j_2} + \sum_{%
\substack{ p > j_0 \\ p \neq j_1,j_2}} c_{j_0 p} m_{\sigma, p} \right).
\end{split} 
\end{equation}
Since $\mathcal{J}$ is a hesitant $\lambda$-walk, we know $\ell_{j_s}
> 0$. On the other hand, by the minimality assumption on $\mathcal{J}$
and Lemma~\ref{lemma:minimality implications}, we know $\ell_{j_t} =
0$ for all $t$ with $0 \leq t \leq s-1$. Moreover, again by minimality
and Lemma~\ref{lemma:minimality implications} we know that $c_{j_t
  j_r} = 0$ for $j_r > j_t$ and $j_r \neq j_{t+1}$. Also by
construction of the $\sigma$, for $p \not \in \mathcal{J} = \{j_0,
j_1, \ldots, j_s\}$ we have $\sigma_p = +$ and hence $m_{\sigma, p} =
0$. Finally, since $\mathcal{J}$ is a hesitant $\lambda$-walk, we have
$\beta_{j_0} = \beta_{j_1}$ and hence $c_{j_0 j_1} = \lee \beta_{j_0},
\beta_{j_1}^{\vee} \ree = 2$. From these considerations we can
simplify~\eqref{eq:find m negative} to 
\begin{equation}
  \label{eq-ifpart}
  \begin{split}
    m_{\sigma,j_s} & = \ell_{j_s} > 0 \\
    m_{\sigma,j_t} & = - c_{j_t j_{t+1}} m_{\sigma, j_{t+1}} \textup{ for } 1
    \leq t \leq s-1 \\
    m_{\sigma,j_0} & = - \left( 2 m_{\sigma, j_1} + c_{j_0 j_2} m_{\sigma,
        j_2} \right).
  \end{split}
\end{equation}

We now claim that $m_{\sigma,j_0}<0$; as already noted, this suffices to
prove the theorem. 
In order to prove this claim we need to know that values of the
constants $c_{j_t j_{t+1}}$ and $c_{j_0 j_2}$ appearing
in~\eqref{eq-ifpart}. By the assumption that $\mathcal{J}$ is a
hesitant $\lambda$-walk, these constants are equal to the
corresponding entry of the Cartan matrices for simple roots which are
adjacent in the Dynkin diagram. For the case-by-case analysis below we
refer to the list of Dynkin diagrams and
Cartan matrices in Tables~\ref{Dynkin} and~\ref{Cartan matrices}. Suppose first that 
the hesitant
$\lambda$-walk only crosses edges of the form 
\begin{picture}(35,10)                   
                       \put(5,5){\circle{5}}    
                       \put(7.5,5){\line(1,0){20}}
                       \put(30,5){\circle{5}}
\end{picture} 
or, if it crosses a double edge
\begin{picture}(35,10)                   
                       \put(5,5){\circle{5}}    
                       \put(7.5,6){\line(1,0){20}}
                       \put(7.5,4){\line(1,0){20}}
                       \put(30,5){\circle{5}}
                       
\end{picture} 
or triple edge 
\begin{picture}(35,10)                   
                       \put(5,5){\circle{5}}  
                       \put(7,6.5){\line(1,0){20}}
                       \put(7.5,5){\line(1,0){20}}
                       \put(7,3.5){\line(1,0){20}}
                       \put(30,5){\circle{5}} 
\end{picture}
then it does so only by going in the direction \emph{agreeing with} 
the arrow drawn on the edge in the 
Dynkin diagram (e.g. in type B, if $i_{j_t} = r-1$ and $i_{j_{t+1}} =
r$ and in type $G$, if $i_{j_t} = 2$ and $i_{j_{t+1}} = 1$). In this
situation, the 
corresponding 
constants $c_{j_t j_{t+1}}$ and $c_{j_0 j_2}$ are all equal
to $-1$. 
So we 
consider this case first. In this setting we have 
\begin{equation} 
\begin{split} 
m_{\sigma,j_s} & = \ell_{j_s} > 0 \\
m_{\sigma,j_t} & = m_{\sigma,j_{t+1}} \textup{ for } 1 \leq t \leq s-1 \\
m_{\sigma,j_0} & = - (2m_{\sigma,j_1} - m_{\sigma,j_2}) 
\end{split} 
\end{equation} 
so $m_{\sigma,j_1} = m_{\sigma,j_2} = \cdots = m_{\sigma,j_s} = \ell_{j_s}$ and $m_{\sigma,j_0} =
- \ell_{j_s} < 0$, as desired.

Next we consider the possibility that the hesitant $\lambda$-walk
crosses a ``double'' edge in a direction \emph{against} the direction
of the arrow on the edge. Since we assume the hesitant $\lambda$-walk
is minimal, it can only cross such an edge once. In particular, in type $B$
(respectively type C) this implies that the hesitant $\lambda$-walk
must be of the form $i_{j_0}=i_{j_1} = r$ and $i_{j_2}=r-1,
i_{j_3}=r-2, \dots, i_{j_s} = r-s+1$ (respectively
$i_{j_0}=i_{j_1}=r-s+1, i_{j_2}=r-s+2, \dots, i_{j_{s-1}} = r-1$ and
$i_{j_s}=r$) for some $s\geq 2$. 
We consider these cases next. 

In type $B$ consider the hesitant $\lambda$-walk of the form 
$i_{j_0}=i_{j_1} = r$ and $i_{j_2}=r-1,
i_{j_3}=r-2, \dots, i_{j_s} = r-s+1$ for some $s \geq 2$. In this case the
equations~\eqref{eq-ifpart} become 
\begin{equation*}
  \begin{split}
    m_{\sigma,j_s} &= \ell_{j_s} > 0 \\
    m_{\sigma,j_{s-1}} &= \cdots = m_{\sigma,j_2} = \ell_{j_s} \\
    m_{\sigma,j_1} &= 2 m_{\sigma,j_2} = 2 \ell_{j_s} \\
    m_{\sigma,j_0} &= - (2 m_{\sigma,j_1} + (-2)m_{\sigma,j_2}) = -2
    \ell_{j_s} < 0
 \end{split}
\end{equation*}
so we obtain $m_{\sigma,j_0}<0$ as desired. In type $C$, consider the
hesitant $\lambda$-walk 
$i_{j_0}=i_{j_1}=r-s+1, i_{j_2}=r-s+2, \dots, i_{j_{s-1}} = r-1$ and
$i_{j_s}=r$ for $s \geq 2$. Note that the case $s=2$ is already covered
in the argument for type B above so we may assume $s \geq 3$. 
It is straightforward to see that here we obtain
from~\eqref{eq-ifpart} that 
$m_{\sigma,j_s}=\ell_{j_s}>0$, $m_{\sigma,j_{s-1}} = \cdots =
m_{\sigma,j_1} = 2 \ell_{j_s}$, and $m_{\sigma,j_0} = - 2 \ell_{j_s} <
0$. Thus $m_{\sigma,j_0} < 0$ as desired. 

The only remaining cases are in the exceptional Lie types F and G, but
many cases
of hesitant $\lambda$-walks in type F are already handled by the
considerations in type B and C above. 
Thus the only remaining cases are: 
$(4,4,3,2,1)$ in type F and $(1,1,2)$ in type G. Both are
straightforward and left to the reader.

\end{proof}

\section{Open questions}\label{sec:questions}

The study of Grossberg-Karshon
twisted cubes is related to representation theory and to the recent
theory of Newton-Okounkov bodies and divided-difference operators on
polytopes. Moreover, in this manuscript we have introduced the notion
of hesitant $\lambda$-walks as well as
hesitant-$\lambda$-walk-avoidance. Below, we briefly mention
some possible avenues for further exploration. 

\begin{enumerate} 

\item The Grossberg-Karshon twisted
  cubes are a special case of the virtual polytopes produced by
  Kiritchenko's divided-difference operators \cite{Kiritchenko}. We may ask whether our
  methods generalize to Kiritchenko's setting to provide combinatorial
  conditions on a dominant weight $\lambda$ and choice of word
  decomposition $\mathcal{I}$ which guarantee that
  the corresponding virtual polytope from Kiritchenko's construction
  is a ``true'' polytope. (See also Kiritchenko's discussion in
  \cite[\S 3.3]{Kiritchenko}.)


\item In the cases when the Grossberg-Karshon twisted polytope is
  untwisted (i.e. it is a ``true'' polytope), it would be of interest
  to study
  the relationship between the Grossberg-Karshon polytope and other
  polytopes appearing in representation theory and Schubert calculus,
  such as Gel'fand-Cetlin polytopes, or (more generally) string
  polytopes, or (even more generally) 
  Newton-Okounkov bodies of Bott-Samelson varieties (see \cite{Kaveh,
    Anderson, Harada-Yang:2014b}).

\item ``Pattern avoidance'' is a
recurring and important theme in the study of Schubert varieties. 
We may ask whether, and how,
hesitant-$\lambda$-walk-avoidance relates to the known results in this
direction \cite{AbeBilley}.

\end{enumerate}


\end{document}